\DeclareMathOperator{\aut}{Aut}
\DeclareMathOperator{\cay}{Cay}
\DeclareMathOperator{\cyc}{Cyc}
\DeclareMathOperator{\iso}{Iso}
\DeclareMathOperator{\orb}{Orb}
\DeclareMathOperator{\rk}{rk}
\DeclareMathOperator{\Span}{Span}
\DeclareMathOperator{\sym}{Sym}
\DeclareMathOperator{\rad}{rad}
\DeclareMathOperator{\alg}{Alg}
\def\@seccntformat#1{\csname the#1\endcsname. } 
\def\@biblabel#1{#1.}
\title{On separable abelian $p$-groups}
\author{Grigory Ryabov}
\address{Sobolev Institute of Mathematics, Novosibirsk, Russia}
\address{Novosibirsk State University, Novosibirsk, Russia}
\email{gric2ryabov@gmail.com}
\thanks{The work is supported by the Russian Foundation for Basic Research (project 17-51-53007)}
\date{}
\newtheorem{prop}{Proposition}[section]
\newtheorem{theo}{Theorem}[section]
\newtheorem{lemm}[prop]{Lemma}
\theoremstyle{definition}
\newtheorem*{rem}{Remark}
\begin{document}

\vspace{\baselineskip}
\vspace{\baselineskip}

\vspace{\baselineskip}

\vspace{\baselineskip}

\begin{abstract}
An $S$-ring (a Schur ring) is said to be \emph{separable} with respect to a class of groups $\mathcal{K}$ if every algebraic isomorphism from the $S$-ring in question to an $S$-ring over a group from $\mathcal{K}$ is induced by a combinatorial isomorphism. A finite group is said to be  \emph{separable} with respect to $\mathcal{K}$ if every $S$-ring over this group is separable with respect to $\mathcal{K}$. We provide a complete classification of  abelian $p$-groups separable with respect to the class of abelian groups.
\\
\\
\textbf{Keywords}: Isomorphisms, Schur rings, $p$-groups.
\\
\textbf{MSC}:05E30, 05C60, 20B35.
\end{abstract}

\maketitle

\section{Introduction}
Let $G$ be a finite group. A subring of the group ring $\mathbb{Z}G$ is called an \emph{$S$-ring} (a \emph{Schur ring}) over $G$ if it is determined in a natural way by a special partition of $G$ (the exact definition is given in Section~2). The classes of the partition  are called the \emph{basic sets} of the $S$-ring.  The concept of the $S$-ring goes back to  Schur and Wielandt. They used $S$-rings to study a permutation group containing a regular subgroup~\cite{Schur, Wi}. For more details on $S$-rings and their applications we refer the reader to~\cite{MP}.

Let $\mathcal{A}$ and $\mathcal{A}^{'}$ be $S$-rings over groups $G$  and  $G^{'}$ respectively. An \emph{algebraic isomorphism} from $\mathcal{A}$ to $\mathcal{A}^{'}$ is a ring isomorphism inducing a bijection between the basic sets of $\mathcal{A}$ and the basic sets of $\mathcal{A}^{'}$. Another type of an isomorphism of $S$-rings comes from  graph theory. A \emph{combinatorial isomorphism} from $\mathcal{A}$ to $\mathcal{A}^{'}$ is defined to be an isomorphism of the corresponding Cayley schemes (see Subsection~2.2). Every combinatorial isomorphism induces the algebraic one. However, the converse statement is not true (the corresponding examples can be found in~\cite{EP1}).

Let $\mathcal{K}$ be a class of groups. Following~\cite{EP3}, we say that an $S$-ring $\mathcal{A}$ is \emph{separable} with respect to $\mathcal{K}$ if every algebraic isomorphism from $\mathcal{A}$ to an $S$-ring over a group from $\mathcal{K}$ is induced by a combinatorial one. We call a finite group \emph{separable} with respect to $\mathcal{K}$ if every $S$-ring over $G$ is separable with respect to $\mathcal{K}$ (see~\cite{Ry2}).

The importance of separable $S$-rings comes from the following observation. Suppose that an $S$-ring $\mathcal{A}$ is separable with respect to $\mathcal{K}$. Then $\mathcal{A}$ is determined up to isomorphism in the class of $S$-rings over groups from $\mathcal{K}$ only by the tensor of its structure constants (with respect to the basis of $\mathcal{A}$ corresponding to the partition of the underlying group). 

Given a group $G$ denote the class of groups isomorphic to $G$ by $\mathcal{K}_G$. If $G$ is separable with respect to $\mathcal{K}_G$ then the isomorphism of two Cayley graphs over $G$ can be verified efficiently by  using the Weisfeiler-Leman algorithm~\cite{WeisL}. In the sense of~\cite{KPS} this means that the Weisfeiler-Leman dimension of the class of Cayley graphs over~$G$ is at most~3. More information concerned with separability and the graph isomorphism problem is presented in~\cite{EP3, Ry1}.

Denote the classes of cyclic and abelian groups by $\mathcal{K}_C$ and $\mathcal{K}_A$ respectively. The cyclic group of order~$n$ is denoted by $C_n$. In the present paper we are interested in abelian groups and especially in abelian $p$-groups which are separable with respect to $\mathcal{K}_A$. The problem of determining of all groups separable with respect to a given class $\mathcal{K}$  seems quite complicated even for $\mathcal{K}=\mathcal{K}_C$. Examples of cyclic groups which are non-separable with respect  to $\mathcal{K}_C$ were found in~\cite{EP1}. In~\cite{EP6} it was proved that cyclic $p$-groups are separable with respect to $\mathcal{K}_C$.  We prove that a similar statement is also true for $\mathcal{K}_A$.

\begin{theo}\label{main1}
For every prime $p$ a cyclic $p$-group is separable with respect to $\mathcal{K}_A$.
\end{theo}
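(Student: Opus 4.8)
The plan is to argue by induction on $n$, where the cyclic $p$-group in question is $C=C_{p^n}$; the cases $n\le 1$ reduce to the easy analysis of $S$-rings over a group of order at most $p$. For the inductive step, fix an algebraic isomorphism $\varphi\colon\mathcal{A}\to\mathcal{A}'$, where $\mathcal{A}$ is an $S$-ring over $C$ and $\mathcal{A}'$ is an $S$-ring over an abelian group $G'$; here $|G'|=p^n$ because the multiset of cardinalities of basic sets is an invariant of an algebraic isomorphism. We must produce a combinatorial isomorphism inducing $\varphi$. The proof splits according to the structure theory of $S$-rings over cyclic $p$-groups: either $\mathcal{A}$ is cyclotomic, i.e.\ $\mathcal{A}=\cyc(K,C)$ for some $K\le\aut(C)$, or $\mathcal{A}$ is a nontrivial generalized wreath product with respect to a section $U/L$ of $C$ with $1<L\le U<C$.

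Suppose first that $\mathcal{A}$ is the generalized wreath product of $\mathcal{A}_U$ and $\mathcal{A}_{C/L}$ with respect to $U/L$. The properties ``$H$ is an $\mathcal{A}$-subgroup'' and ``a given basic set is a union of cosets of a fixed $\mathcal{A}$-subgroup'' are expressed by identities inside the $S$-ring and hence are transported by $\varphi$; consequently $G'$ has $\mathcal{A}'$-subgroups $L'\le U'$ with $|L'|=|L|$, $|U'|=|U|$ such that $\mathcal{A}'$ is the generalized wreath product of $\mathcal{A}'_{U'}$ and $\mathcal{A}'_{G'/L'}$ with respect to $U'/L'$, and $\varphi$ restricts to algebraic isomorphisms $\varphi_U\colon\mathcal{A}_U\to\mathcal{A}'_{U'}$ and $\varphi_{C/L}\colon\mathcal{A}_{C/L}\to\mathcal{A}'_{G'/L'}$ inducing one and the same algebraic isomorphism on the shared section $\mathcal{A}_{U/L}\to\mathcal{A}'_{U'/L'}$. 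Since $U$, $C/L$ and $U/L$ are cyclic $p$-groups of order strictly smaller than $p^n$, whereas $U'$, $G'/L'$ and $U'/L'$ are abelian $p$-groups, the induction hypothesis furnishes combinatorial isomorphisms inducing $\varphi_U$, $\varphi_{C/L}$ and their common restriction to the section. These are amalgamated into a combinatorial isomorphism of $\mathcal{A}$ onto $\mathcal{A}'$ inducing $\varphi$ by the standard gluing lemma for generalized wreath products; the one point needing care is that the two factor isomorphisms must first be adjusted by automorphisms of the respective schemes so that they agree on the section, which is possible since any two combinatorial isomorphisms inducing the same algebraic isomorphism differ by such an automorphism.

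It remains to handle the cyclotomic case $\mathcal{A}=\cyc(K,C)$, and here we may assume in addition that $\mathcal{A}$ is not a nontrivial generalized wreath product, the latter situation being already covered. The idea is to show that in this indecomposable cyclotomic case the target group $G'$ is necessarily cyclic; granting this, $\mathcal{A}'$ is an $S$-ring over a cyclic $p$-group, and the separability of cyclic $p$-groups with respect to $\mathcal{K}_C$ proved in~\cite{EP6} supplies a combinatorial isomorphism inducing $\varphi$. To see that $G'$ is cyclic one studies the basic set $X$ of $\mathcal{A}$ containing a generator of $C$: it satisfies $\langle X\rangle=C$, so $\langle\varphi(X)\rangle=G'$, while the structure constants attached to $X$ and to the chain of $\mathcal{A}$-subgroups --- the cardinality of $X$, whether $X$ is symmetric, the multiplicities with which subgroup sums occur in the powers of $X$, and so on --- are transported by $\varphi$. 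For an indecomposable cyclotomic $\mathcal{A}$ these data turn out to be rigid enough to be realisable over an abelian group of order $p^n$ only when that group is cyclic.

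The heart of the matter --- and the step that will be the main obstacle --- is exactly this rigidity assertion for indecomposable cyclotomic $S$-rings over cyclic $p$-groups. The natural way to prove it is to use that such an $S$-ring is normal, i.e.\ the regular subgroup $C$ is normal in the automorphism group of the associated Cayley scheme; normality tightly constrains that automorphism group, and with it the carrier group of any algebraically isomorphic $S$-ring, which is what should force cyclicity. Making this precise uniformly in $p$ and $n$, and meshing it cleanly with the generalized wreath reduction above, is where the principal effort of the proof is concentrated.
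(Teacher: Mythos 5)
Your overall architecture (induction on the exponent, dichotomy between the cyclotomic case and the proper generalized wreath product case, and in the cyclotomic case forcing the target group to be cyclic so that separability with respect to $\mathcal{K}_C$ from~\cite{EP6} finishes) is the same as the paper's. The cyclotomic rigidity step that you flag as the ``main obstacle'' is in fact the easier half: the paper proves it (Lemma~\ref{trivradsep}) by a short counting argument --- if $G'$ were non-cyclic, some basic set $X'$ would contain an element of order $p$ outside the image of the unique $\mathcal{A}$-subgroup of order $p$, and then the coefficient of $e'$ in $(\underline{X}')^p$ is not divisible by $p$ (there are between $1$ and $|X'|\le p-1$ constant $p$-tuples fixed by cyclic rotation), whereas the coefficient of $e$ in $\underline{X}^p$ is divisible by $p$ since $x^p\neq e$ for all $x\in X$. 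So that part of your plan is sound and completable.

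The genuine gap is in the wreath product case, which you dismiss as a ``standard gluing lemma.'' Given combinatorial isomorphisms $f_U$ inducing $\varphi_U$ and $f_{G/L}$ inducing $\varphi_{G/L}$, their restrictions to the section $S=U/L$ differ by an element of $\aut(\mathcal{A}_S)$; to repair the mismatch you must lift that element to an automorphism of $\mathcal{A}_U$ (or of $\mathcal{A}_{G/L}$), and such a lift need not exist. The correct hypothesis is $\aut(\mathcal{A}_U)^S=\aut(\mathcal{A}_S)$, i.e.\ surjectivity of the restriction map (Lemma~\ref{sepwr}, taken from~\cite[Lemma~4.4]{Ry1}); your phrase ``differ by such an automorphism'' conflates an automorphism of the section scheme with one of the whole factor scheme. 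Establishing or circumventing this surjectivity is where essentially all of the paper's effort goes: it normalizes the section so that $|L|=p$ and $\rad(\mathcal{A}_U)=1$ (Lemma~\ref{wreathcyclic}), then either gets $\aut(\mathcal{A}_U)^S=\aut(\mathcal{A}_S)$ from $2$-isolation of $\aut(\mathcal{A}_S)$ in the normal cyclotomic case (Lemmas~\ref{2isol} and~\ref{2isol1}) or from $\aut(\mathcal{A}_S)=\sym(S)$ in the rank-$2$ case, and in the residual subcase $|S|=p$ with a basic set $X$ outside $U$ satisfying $\rad(X)=L$ and $\langle X\rangle=G$ it abandons gluing altogether and instead shows directly that $G'$ must be cyclic (combining $U^{\varphi}\cong C_{p^2}$ with cyclicity of $G'/L^{\varphi}$). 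Without an argument of this kind your inductive step does not close. Two smaller points: your dichotomy omits the rank-$2$ case (harmless, but it is not cyclotomic for $n\ge 2$), and the paper treats $p\in\{2,3\}$ separately via~\cite{Ry1} because several of the structural lemmas used in Section~3 assume $p$ odd (e.g.\ cyclicity of $\aut(C_{p^n})$).
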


The result obtained in~\cite{Ry2} implies that an abelian  group of order $4p$ is separable with respect to $\mathcal{K}_A$ for every prime~$p$. From~\cite{Klin} it follows that for every group $G$ of order at least~4 the group $G\times G$ is non-separable with respect to $\mathcal{K}_{G\times G}$.  One can check that a normal subgroup of a group separable with respect to $\mathcal{K}_A$ is separable with respect to $\mathcal{K}_A$ (see also Lemma~\ref{nonsepwr}). The above discussion shows that a non-cyclic abelian $p$-group separable with respect to $\mathcal{K}_A$ is isomorphic to  $C_p\times C_{p^k}$ or $C_p\times C_p\times C_{p^k}$, where $p\in \{2,3\}$ and $k\geq 1$. The separability of the groups from the first family was proved in~\cite{Ry1}. In the present paper we study the question on the separability of the groups from the second family.

\begin{theo}\label{main2}
The group  $C_p\times C_p\times C_{p^k}$, where $p\in\{2,3\}$ and $k\geq 1$, is separable with respect to $\mathcal{K}_A$ if and only if $k=1$.
\end{theo}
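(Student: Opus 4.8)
The plan is to split the statement into two directions. For the "only if" part, I would show that $C_p\times C_p\times C_{p^k}$ is non-separable with respect to $\mathcal{K}_A$ whenever $k\geq 2$. The natural strategy is to produce an explicit $S$-ring $\mathcal{A}$ over $G=C_p\times C_p\times C_{p^k}$ together with an algebraic isomorphism to an $S$-ring over some abelian group that is not induced by any combinatorial isomorphism. Here I would look for a "wreath-type" or "generalized wreath" construction: since $G$ contains a section isomorphic to $C_p\times C_{p^2}$ (for $k\geq 2$) or more relevantly since one can build an $S$-ring as a nontrivial extension along the subgroup chain coming from the $C_{p^k}$ factor, the known non-separable examples over $C_p\times C_p$-type bases (cf.\ the Klin-type examples giving $G\times G$ non-separable, referenced in the excerpt) should be liftable. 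The cleanest route is probably Lemma~\ref{nonsepwr} (the non-separability of a suitable wreath product), applied so that the relevant "top" and "bottom" pieces force non-separability; I would aim to realize a wreath/star product $\mathcal{A}_1\wr \mathcal{A}_2$ inside an $S$-ring over $G$ where one factor already fails separability in the abelian class. The main obstacle in this direction is to verify that the candidate algebraic isomorphism genuinely fails to be combinatorial — this requires analyzing the automorphism groups / Cayley schemes carefully, or invoking a clean criterion (e.g.\ separability is inherited by sections and destroyed by certain wreath decompositions).

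For the "if" part, I must prove that $C_p\times C_p$ (the case $k=1$) is separable with respect to $\mathcal{K}_A$ for $p\in\{2,3\}$. So $|G|\in\{8,27\}$, which is small, and the strategy is a case analysis over all $S$-rings on such a group. First I would enumerate, up to Cayley isomorphism, all $S$-rings over $C_2\times C_2\times C_2$ and over $C_3\times C_3\times C_3$ — these are the abelian groups of order $8$ and $27$ of rank $3$. For $p=2$, $C_2^3$ has been heavily studied and the list of $S$-rings is essentially known; for $p=3$, $C_3^3$ is larger but still tractable, and here I would lean on the classification of $S$-rings over groups of small order (or over $C_p^3$ specifically), separating the $S$-rings into those that are "primitive/trivial", those that decompose as a (generalized) wreath or tensor product, and the remaining "exceptional" ones. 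For each $S$-ring $\mathcal{A}$ in the list I would check that every algebraic isomorphism to an $S$-ring over an abelian group is induced by a combinatorial one; decomposable $S$-rings reduce via standard lemmas (separability of tensor and wreath products follows from separability of the factors, over small cyclic/elementary abelian groups which are known to be separable), and the handful of exceptional $S$-rings are handled directly by exhibiting the combinatorial isomorphism or by a Burnside-type / Schur-ring-theoretic argument.

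A key auxiliary point, used in both directions, is the behavior of separability under sections and under wreath/tensor decomposition: I would first record (or cite from the earlier parts of the paper and from \cite{Ry1, Ry2, EP6}) the lemmas that (a) a section of a separable group is separable, (b) a tensor product of separable $S$-rings over groups in $\mathcal{K}_A$ is separable, and (c) a suitable nontrivial wreath product over a group of order $\geq 4$ yields a non-separable $S$-ring (Lemma~\ref{nonsepwr}). Together (a) and (c) immediately give the "only if" direction once one checks $k\geq 2$ forces the bad wreath decomposition, and (b) together with the finite case-check gives the "if" direction.

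The step I expect to be the main obstacle is the finite verification for $p=3$: the group $C_3\times C_3\times C_3$ has order $27$ and a genuinely rich lattice of $S$-rings, so confirming separability for every one of them against \emph{all} abelian target groups of order $27$ (there are three: $C_{27}$, $C_3\times C_9$, $C_3\times C_3\times C_3$) is where the real work lies — the algebraic isomorphisms must be tracked on basic sets and then lifted to Cayley-scheme isomorphisms. Reducing this to a manageable number of cases via the decomposition lemmas, and isolating the truly exceptional $S$-rings over $C_3^3$, is the crux; the $p=2$ case should be comparatively routine given the existing literature on $S$-rings over $C_2^3$.
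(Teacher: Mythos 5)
Your ``if'' direction follows essentially the same route as the paper's Proposition~\ref{k1}: enumerate the $S$-rings over $C_p^3$, dispose of the rank-$2$, tensor-product, and small-section wreath-product cases by the decomposition lemmas (Lemmas~\ref{schurtens} and~\ref{2isol1}, the latter applicable because $|S|\leq 3$ makes $\aut(\mathcal{A}_S)$ $2$-isolated), and treat the remaining exceptional $S$-rings individually (there are $14$ of them, all over $C_3^3$). The paper settles the exceptional cases by first using the separability of all other abelian targets of order~$27$ to reduce to algebraic \emph{automorphisms} of a single exceptional ring, and then checking by computer that $|\aut_{\alg}(\mathcal{A}_j)|=|\iso(\mathcal{A}_j)|/|\aut(\mathcal{A}_j)|$, which by formula~(3) forces every algebraic automorphism to be induced by an isomorphism. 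Your sketch is compatible with this but leaves that final verification open.

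The genuine gap is in the ``only if'' direction. Lemma~\ref{nonsepwr} is not the statement that ``a suitable nontrivial wreath product yields a non-separable $S$-ring''; it only \emph{lifts} an algebraic automorphism that is not induced by an isomorphism from a normal subgroup $H$ to the wreath product $\mathcal{B}\wr\mathbb{Z}(G/H)$. It therefore reduces the case $k\geq 2$ to exhibiting a seed pair $(\mathcal{B},\varphi)$ on $H=C_p\times C_p\times C_{p^2}$, but it cannot produce that seed. Your proposed source for it --- lifting the Golfand--Klin examples \cite{Klin} showing $H\times H$ is non-separable for $|H|\geq 4$ --- does not apply here: $C_p\times C_p\times C_{p^2}$ with $p\in\{2,3\}$ has no subgroup or section of the form $H\times H$ with $|H|\geq 4$, and in fact every proper subgroup and quotient of these groups is separable by Theorem~\ref{main1} and the results of \cite{Ry1}, so there is no inherited obstruction and the example must be built from scratch. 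The paper does exactly this: it writes down explicit cyclotomic $S$-rings over $C_2\times C_2\times C_4$ and over $C_3\times C_3\times C_9$, an explicit order-$2$ algebraic automorphism $\varphi$ of each, and then proves $\varphi$ is not induced by any isomorphism because the algebraic fusion $\mathcal{A}^{\langle\varphi\rangle}$ is non-schurian (verified by computer for $p=2$, and identified with the non-schurian $S$-ring of \cite{EKP} for $p=3$), which would contradict Lemma~\ref{fusion} if $\varphi$ were combinatorial. You correctly flag the verification that $\varphi$ fails to be combinatorial as the main obstacle, but you offer no mechanism for it; the schurity-of-fusions criterion of Lemma~\ref{fusion} is the missing idea, and without it (or some substitute) the ``only if'' half of the theorem is not proved.
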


As an immediate consequence of Theorem~\ref{main1}, Theorem~\ref{main2}, and the above mentioned results, we obtain a complete classification of abelian $p$-groups separable with respect to $\mathcal{K}_A$.

\begin{theo}\label{main3}
An abelian $p$-group is separable with respect to $\mathcal{K}_A$  if and only if it is cyclic or isomorphic to one of the following groups:
$$C_2\times C_{2^k},~C_3\times C_{3^k},~C_2^3,~C_3^3,$$
where $k\geq 1$.
\end{theo}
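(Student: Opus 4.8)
The plan is to derive Theorem~\ref{main3} by assembling the pieces already in place rather than by a fresh argument. First I would invoke the structure theorem for finite abelian $p$-groups: every abelian $p$-group $G$ is a direct product of cyclic $p$-groups, $G\cong C_{p^{k_1}}\times\cdots\times C_{p^{k_r}}$ with $k_1\le\cdots\le k_r$. If $r=1$ the group is cyclic and Theorem~\ref{main1} gives separability with respect to $\mathcal{K}_A$, so the content is entirely in the non-cyclic case $r\ge 2$.

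For the non-cyclic case the key reduction is the remark recorded in the introduction (``a normal subgroup of a group separable with respect to $\mathcal{K}_A$ is separable with respect to $\mathcal{K}_A$'', cf.\ Lemma~\ref{nonsepwr}) together with the result from~\cite{Klin} that $H\times H$ is non-separable with respect to $\mathcal{K}_{H\times H}$ whenever $|H|\ge 4$. Combining these: if $G$ contains a subgroup isomorphic to $H\times H$ with $|H|\ge4$, then $G$ cannot be separable with respect to $\mathcal{K}_A$ (here one uses that $\mathcal{K}_{H\times H}\subseteq\mathcal{K}_A$ when $H$ is abelian, so non-separability with respect to the smaller class forces non-separability with respect to $\mathcal{K}_A$). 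I would spell out which abelian $p$-groups avoid every such square: writing $G$ as above, $G$ contains $C_{p^2}\times C_{p^2}$ as soon as $k_{r-1}\ge 2$, and contains $C_p^2\times C_p^2=C_p^4$ as soon as $r\ge 4$; also $C_{p^k}\times C_{p^k}$ is already disallowed for $k\ge2$. A short case analysis on $(r,k_1,\dots,k_r,p)$ then shows the only surviving candidates are $C_p\times C_{p^k}$, $C_p\times C_p\times C_{p^k}$, and $C_p^3$ with the further constraint that for the last two one needs $p\in\{2,3\}$ (since $C_p^3\supseteq C_p\times C_p$ and, as noted in the introduction citing~\cite{Ry2}, an abelian group of order $4p$ is separable but larger configurations are not; more precisely, for $p\ge5$ the group $C_p\times C_p$ already embeds in ways that, via the order-$4p$ obstruction argument in the introduction, rule out $p\ge5$ for any non-cyclic example).

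Then for the groups that survive the elimination I would supply positive separability from the stated theorems: $C_2\times C_{2^k}$ and $C_3\times C_{3^k}$ are separable by the result of~\cite{Ry1} quoted in the introduction; $C_2^3$ and $C_3^3$ are separable as the case $k=1$ of Theorem~\ref{main2}; and in fact Theorem~\ref{main2} simultaneously eliminates $C_p\times C_p\times C_{p^k}$ for $k\ge2$, closing the last gap between the candidate list and the final list. Cyclic $p$-groups are handled by Theorem~\ref{main1}. Putting the two directions together yields exactly the list $C_2\times C_{2^k}$, $C_3\times C_{3^k}$, $C_2^3$, $C_3^3$ (with $k\ge1$) plus the cyclic groups, which is the assertion of Theorem~\ref{main3}.

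I expect the main obstacle to be bookkeeping rather than depth: one must be careful that every non-separability deduction is stated for a class $\mathcal{K}$ with $\mathcal{K}\subseteq\mathcal{K}_A$ (so that failure over the small class implies failure over $\mathcal{K}_A$), and one must correctly justify the exclusion of primes $p\ge5$ for the non-cyclic families — this uses the order-$4p$ separability fact from~\cite{Ry2} in the contrapositive together with the subgroup-inheritance lemma, and it is the one spot where the argument is not purely ``quote a theorem.'' Once that case distinction is laid out cleanly, the proof is just the intersection of the ``necessary'' list from the obstructions with the ``sufficient'' list from Theorems~\ref{main1} and~\ref{main2} and the cited results.
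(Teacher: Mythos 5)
Your overall strategy is the same as the paper's: Theorem~\ref{main3} is obtained there as an immediate assembly of Theorem~\ref{main1}, Theorem~\ref{main2}, the separability of $C_p\times C_{p^k}$ for $p\in\{2,3\}$ from~\cite{Ry1}, and the elimination of all other non-cyclic abelian $p$-groups via the result of~\cite{Klin} on $H\times H$ combined with the inheritance of separability by normal subgroups (Lemma~\ref{nonsepwr}). Your bookkeeping for the elimination ($k_{r-1}\ge 2$ forces a copy of $C_{p^2}\times C_{p^2}$, $r\ge 4$ forces a copy of $C_p^2\times C_p^2$) is exactly the intended argument.

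However, the one step you yourself flag as delicate --- excluding $p\ge 5$ for the non-cyclic families --- is carried out incorrectly. You appeal to an ``order-$4p$ obstruction argument'' from~\cite{Ry2} used ``in the contrapositive,'' but the result of~\cite{Ry2} is a positive statement (abelian groups of order $4p$ \emph{are} separable), and its contrapositive says nothing about non-separability of any $p$-group; there is no obstruction to be extracted from it. The correct mechanism is the one you already use for the other eliminations: for $p\ge 5$ the group $H=C_p$ has order at least~$4$, so $C_p\times C_p=H\times H$ is non-separable with respect to $\mathcal{K}_{C_p\times C_p}\subseteq\mathcal{K}_A$ by~\cite{Klin}, and every non-cyclic abelian $p$-group contains $C_p\times C_p$ as a (normal) subgroup, hence is non-separable. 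Note also that this constraint removes $p\ge 5$ from \emph{all} of your candidate families, not only ``the last two'': $C_p\times C_{p^k}$ likewise contains $C_p\times C_p$ (its subgroup of elements of order dividing $p$), and since the positive result you quote from~\cite{Ry1} covers only $p\in\{2,3\}$, without this elimination the family $C_p\times C_{p^k}$ with $p\ge 5$ would be left unresolved. With that step repaired, your proof closes and coincides with the paper's.
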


Throughout the paper we write for short ``separable''   instead of ``separable with respect to $\mathcal{K}_A$''. The text is organized in the following way. Section~2  contains a background of $S$-rings. Section~3 is devoted to $S$-rings over cyclic $p$-groups. We finish Section~3 with the proof of Theorem~\ref{main1}. In Section~4 we prove Theorem~\ref{main2}.

The author would like to thank Prof. I. Ponomarenko for the fruitful discussions on the subject matters  and  Dr. Sven Reichard for the help with computer calculations.

$$$$

{\bf Notation.}

The ring of rational integers is denoted by $\mathbb{Z}$.

Let $X\subseteq G$. The element $\sum_{x\in X} {x}$ of the group ring $\mathbb{Z}G$ is denoted by $\underline{X}$.

The order of $g\in G$ is denoted by $|g|$.

The set $\{x^{-1}:x\in X\}$ is denoted by $X^{-1}$.

The subgroup of $G$ generated by $X$ is denoted by $\langle X\rangle$; we also set $\rad(X)=\{g\in G:\ gX=Xg=X\}$.

If  $m\in \mathbb{Z}$ then the set $\{x^m: x \in X\}$ is denoted by $X^{(m)}$.

Given a set $X\subseteq G$ the set $\{(g,xg): x\in  X, g\in G\}$ of edges of the Cayley graph $\cay(G,X)$ is denoted by $R(X)$.

The group of all permutations of a set $\Omega$ is denoted by $\sym(\Omega)$.

The subgroup of $\sym(G)$ induced by right multiplications of $G$ is denoted by $G_{right}$.

For a set $\Delta\subseteq \sym(G)$ and a section $S=U/L$ of $G$ we set 
$$\Delta^S=\{f^S:~f\in \Delta,~S^f=S\},$$
where $S^f=S$ means that $f$ permutes the $L$-cosets in $U$ and $f^S$ denotes the bijection of $S$ induced by $f$.

If a group $K$ acts on a set $\Omega$ then the set of all orbtis of $K$ on $\Omega$ is denoted by $\orb(K,\Omega)$.

If $H\leq G$ then the normalizer of $H$ in $G$ is denoted by $N_G(H)$.

If $K\leq \sym(\Omega)$ and $\alpha\in \Omega$ then the stabilizer of $\alpha$ in $K$ is denoted by $K_{\alpha}$.

The cyclic group of order $n$ is denoted by  $C_n$.

\section{$S$-rings}
In this section we give a background of $S$-rings. The most of definitions and statements presented here are taken from~\cite{MP,Ry1}.

\subsection{Definitions and basic facts}

Let $G$ be a finite group and $\mathbb{Z}G$  the group ring over the integers. The identity element of $G$ is denoted by $e$.  A subring  $\mathcal{A}\subseteq \mathbb{Z} G$ is called an \emph{$S$-ring} over $G$ if there exists a partition $\mathcal{S}=\mathcal{S}(\mathcal{A})$ of~$G$ such that:

$(1)$ $\{e\}\in\mathcal{S}$,

$(2)$  if $X\in\mathcal{S}$ then $X^{-1}\in\mathcal{S}$,

$(3)$ $\mathcal{A}=\Span_{\mathbb{Z}}\{\underline{X}:\ X\in\mathcal{S}\}$.

The elements of $\mathcal{S}$ are called the \emph{basic sets} of  $\mathcal{A}$ and the number $|\mathcal{S}|$ is called the \emph{rank} of~$\mathcal{A}$. Given $X,Y,Z\in\mathcal{S}$ the number of distinct representations of $z\in Z$ in the form $z=xy$ with $x\in X$ and $y\in Y$ is denoted by $c^Z_{X,Y}$. If $X$ and $Y$ are basic sets of $\mathcal{A}$ then $\underline{X}~\underline{Y}=\sum_{Z\in \mathcal{S}(\mathcal{A})}c^Z_{X,Y}\underline{Z}$. So the integers  $c^Z_{X,Y}$ are structure constants of $\mathcal{A}$ with respect to the basis $\{\underline{X}:\ X\in\mathcal{S}\}$. It is easy to verify that given basic  sets $X$ and $Y$ the set $XY$ is also basic whenever  $|X|=1$ or $|Y|=1$.

A set $X \subseteq G$ is said to be an \emph{$\mathcal{A}$-set} if $\underline{X}\in \mathcal{A}$. A subgroup $H \leq G$ is said to be an \emph{$\mathcal{A}$-subgroup} if $H$ is an $\mathcal{A}$-set. One can check that for every $\mathcal{A}$-set $X$ the groups $\langle X \rangle$ and $\rad(X)$ are $\mathcal{A}$-subgroups.

A section $U/L$ is said to be an \emph{$\mathcal{A}$-section} if $U$ and $L$ are $\mathcal{A}$-subgroups. If $S=U/L$ is an $\mathcal{A}$-section then the module
$$\mathcal{A}_S=Span_{\mathbb{Z}}\left\{\underline{X}^{\pi}:~X\in\mathcal{S}(\mathcal{A}),~X\subseteq U\right\},$$
where $\pi:U\rightarrow U/L$ is the canonical epimorphism, is an $S$-ring over $S$.

 If $K \leq \aut(G)$ then the set  $\orb(K,G)$ forms a partition of  $G$ that defines an  $S$-ring $\mathcal{A}$ over $G$.  In this case  $\mathcal{A}$ is called \emph{cyclotomic} and denoted by $\cyc(K,G)$.

Let $G$ be abelian. Then from Schur's result~\cite{Schur} it follows that $X^{(m)}\in \mathcal{S}(\mathcal{A})$ for every  $X\in \mathcal{S}(\mathcal{A})$ and every $m$ coprime to $|G|$. We say that $X,Y\in \mathcal{S}(\mathcal{A})$ are \emph{rationally conjugate} if $Y=X^{(m)}$ for some $m$ coprime to $|G|$.

\subsection{Isomorphisms and schurity}

Throughout this and the next two subsections $\mathcal{A}$ and $\mathcal{A}^{'}$ are $S$-rings over groups $G$ and $G^{'}$ respectively. A bijection $f:G\rightarrow G^{'}$ is called \emph{a (combinatorial) isomorphism } from $\mathcal{A}$ over to $\mathcal{A}^{'}$  if 
$$\{R(X)^f: X\in \mathcal{S}(\mathcal{A})\}=\{R(X^{'}): X^{'}\in \mathcal{S}(\mathcal{A}^{'})\},$$ where $R(X)^f=\{(g^f,~h^f):~(g,~h)\in R(X)\}$. If there exists an  isomorphism from  $\mathcal{A}$  to $\mathcal{A}^{'}$ we write $\mathcal{A}\cong\mathcal{A}^{'}$. The group $\iso(\mathcal{A})$ of all isomorphisms from $\mathcal{A}$ onto itself has a normal subgroup
$$\aut(\mathcal{A})=\{f\in \iso(\mathcal{A}): R(X)^f=R(X)~\text{for every}~X\in \mathcal{S}(\mathcal{A})\}.$$
This subgroup is called the \emph{automorphism group} of $\mathcal{A}$. Note that $\aut(\mathcal{A})\geq G_{right}$. If $S$ is an $\mathcal{A}$-section then $\aut(\mathcal{A})^S\leq\aut(\mathcal{A}_S)$. An $S$-ring $\mathcal{A}$ over $G$ is said to be \emph{normal} if $G_{right}\trianglelefteq \aut(\mathcal{A})$. One can check that 
$$N_{\aut(\mathcal{A})}(G_{right})_e=\aut(\mathcal{A})\cap \aut(G).~\eqno(1)$$

Now let $K$ be a subgroup of $\sym(G)$ containing  $G_{right}$. As Schur proved in~\cite{Schur}, the $\mathbb{Z}$-submodule
$$V(K,G)=\Span_{\mathbb{Z}}\{\underline{X}:~X\in \orb(K_e,~G)\},$$
is an $S$-ring over $G$. An $S$-ring $\mathcal{A}$ over  $G$ is called \emph{schurian} if $\mathcal{A}=V(K,G)$ for some $K$ such that $G_{right}\leq K \leq \sym(G)$. Not every $S$-ring is schurian. The first example of a non-schurian $S$-ring was found by Wielandt in~\cite[Theorem~25.7]{Wi}. It is easy to see that $\mathcal{A}$ is schurian if and only if 
$$\mathcal{S}(\mathcal{A})=\orb(\aut(\mathcal{A})_e,G).~\eqno(2)$$
Every cyclotomic $S$-ring is schurian. More precisely, if $\mathcal{A}=\cyc(K,G)$ for some $K\leq \aut(G)$ then $\mathcal{A}=V(G_{right}\rtimes K,G)$.

\subsection{ Algebraic isomorphisms and separability}

 A bijection $\varphi:\mathcal{S}(\mathcal{A})\rightarrow\mathcal{S}(\mathcal{A}^{'})$ is called an \emph{algebraic isomorphism} from $\mathcal{A}$  to $\mathcal{A}^{'}$ if 
$$c_{X,Y}^Z=c_{X^{\varphi},Y^{\varphi}}^{Z^{\varphi}}$$ 
for all $X,Y,Z\in \mathcal{S}(\mathcal{A})$. The mapping $\underline{X}\rightarrow \underline{X}^{\varphi}$ is extended by linearity to the ring isomorphism of $\mathcal{A}$  and $\mathcal{A}^{'}$. This ring isomorphism we denote also by $\varphi$. If there exists an algebraic isomorphism from  $\mathcal{A}$  to $\mathcal{A}^{'}$ then we write $\mathcal{A}\cong_{\alg}\mathcal{A}^{'}$. An algebraic isomorphism from $\mathcal{A}$ to itself is called an \emph{algebraic automorphism} of $\mathcal{A}$. The group of all algebraic automorphisms of $\mathcal{A}$ is denoted by $\aut_{\alg}(\mathcal{A})$. 

Every isomorphism $f$ of $S$-rings  preserves  the structure constants  and hence $f$ induces the algebraic isomorphism $\varphi_f$. However, not every algebraic isomorphism is induced by a combinatorial one (see~\cite{EP1}). Let $\mathcal{K}$ be a class of groups. An $S$-ring $\mathcal{A}$ is defined to be \emph{separable} with respect to $\mathcal{K}$ if every  algebraic isomorphism from $\mathcal{A}$ to an $S$-ring over a group from $\mathcal{K}$ is induced by a combinatorial isomorphism. 

Put  
$$\aut_{\alg}(\mathcal{A})_0=\{\varphi\in \aut_{\alg}(\mathcal{A}): \varphi=\varphi_f~\text{for some}~f\in \iso(\mathcal{A})\}.$$
It is easy to see that $\varphi_f=\varphi_g$ for $f,g\in \iso(\mathcal{A})$ if and only if $gf^{-1}\in \aut(\mathcal{A})$. Therefore
$$|\aut_{\alg}(\mathcal{A})_0|=|\iso(\mathcal{A})|/|\aut(\mathcal{A})|.~\eqno(3)$$

One can verify that for every group $G$ the $S$-ring of rank~2 over $G$ and $\mathbb{Z}G$ are separable with respect to the class of all finite groups. In the former case  there exists the unique algebraic isomorphism from the $S$-ring of rank~2 over $G$ to the $S$-ring of rank~2 over a given  group of order $|G|$ and this algebraic isomorphism is induced by every bijection. In the latter case every basic set is singleton and hence every algebraic isomorphism is induced by an isomorphism in a natural way.

Let $\varphi:\mathcal{A}\rightarrow \mathcal{A}^{'}$ be an algebraic isomorphism. One can check that $\varphi$ is extended to a bijection between  $\mathcal{A}$- and $\mathcal{A}^{'}$-sets and hence between  $\mathcal{A}$- and $\mathcal{A}^{'}$-sections. The images of an $\mathcal{A}$-set $X$ and an $\mathcal{A}$-section $S$ under these extensions are denoted by $X^{\varphi}$ and $S^{\varphi}$ respectively. If $S$ is an $\mathcal{A}$-section then  $\varphi$ induces the algebraic isomorphism $\varphi_S:\mathcal{A}_S\rightarrow \mathcal{A}^{'}_{S^{'}}$, where $S^{'}=S^{\varphi}$. The above bijection between the $\mathcal{A}$- and $\mathcal{A}^{'}$-sets is, in fact, an isomorphism of the corresponding lattices. One can check that 
$$ \langle X^{\varphi} \rangle = \langle X \rangle ^{\varphi}~\text{and}~\rad(X^{\varphi})=\rad(X)^{\varphi}~$$for every  $\mathcal{A}$-set $X$ (see~\cite[(10)]{EP5}). Since $c^{\{e\}}_{X,Y}=\delta_{Y,X^{-1}}|X|$, where $X,Y\in \mathcal{S}(\mathcal{A})$ and $\delta_{Y,X^{-1}}$ is the Kronecker delta,  we conclude that $|X|=c^{\{e\}}_{X,X^{-1}}$, $(X^{-1})^{\varphi}=(X^{\varphi})^{-1}$, and $|X|=|X^{\varphi}|$ for every $\mathcal{A}$-set $X$. In particular, $|G|=|G^{'}|$.

\subsection{Cayley isomorphisms}

A group isomorphism $f:G\rightarrow G^{'}$ is called a \emph{Cayley isomorphism} from  $\mathcal{A}$  to $\mathcal{A}^{'}$   if $\mathcal{S}(\mathcal{A})^f=\mathcal{S}(\mathcal{A}^{'})$. If there exists a Cayley isomorphism from  $\mathcal{A}$  to $\mathcal{A}^{'}$ we write $\mathcal{A}\cong_{\cay}\mathcal{A}^{'}$. Every Cayley isomorphism is a (combinatorial) isomorphism, however the converse statement is not true.

\subsection{Algebraic fusions}

Let $\mathcal{A}$ be an $S$-ring over $G$ and $\Phi\leq \aut_{\alg}(\mathcal{A})$. Given $X\in \mathcal{S}(\mathcal{A})$ put $X^{\Phi}=\bigcup \limits_{\varphi\in \Phi} X^{\varphi}$. The partition
$$\{X^{\Phi}: X\in \mathcal{S}(\mathcal{A})\}$$
defines an $S$-ring over $G$ called the \emph{algebraic fusion} of $\mathcal{A}$ with respect to $\Phi$ and denoted by $\mathcal{A}^{\Phi}$. Suppose that $\Phi=\{\varphi_f: f\in K\}$ for some $K\leq \iso(\mathcal{A})$ and $\mathcal{A}$ is schurian. Then one can verify that 
$$\mathcal{A}^{\Phi}=V(\aut(\mathcal{A})K, G).$$
In particular, the following statement holds.

\begin{lemm}\label{fusion}
Let $\mathcal{A}$ be a schurian $S$-ring over $G$ and $K\leq \iso(\mathcal{A})$. Then $\mathcal{A}^{\Phi}$, where $\Phi=\{\varphi_f: f\in K\}$, is also schurian.
\end{lemm}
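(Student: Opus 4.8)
The lemma is immediate from the identity $\mathcal{A}^{\Phi}=V(\aut(\mathcal{A})K,G)$ quoted just above it, so the plan is to explain how that identity is verified. Put $M=\aut(\mathcal{A})K$. Since $\aut(\mathcal{A})\trianglelefteq\iso(\mathcal{A})$ and $K\leq\iso(\mathcal{A})$, the set $M$ is a subgroup of $\iso(\mathcal{A})\leq\sym(G)$ with $M\geq\aut(\mathcal{A})\geq G_{right}$, so $V(M,G)$ is a well-defined $S$-ring over $G$ and is schurian by the very definition of a schurian $S$-ring. Hence it suffices to prove $\orb(M_e,G)=\{X^{\Phi}:X\in\mathcal{S}(\mathcal{A})\}$. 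Before that I would record two easy preliminaries: the map $f\mapsto\varphi_f$ is a homomorphism $\iso(\mathcal{A})\to\aut_{\alg}(\mathcal{A})$, so $\Phi=\{\varphi_f:f\in K\}$ is a subgroup of $\aut_{\alg}(\mathcal{A})$ (and therefore $\Phi$ permutes $\mathcal{S}(\mathcal{A})$ and $X^{\Phi}$ is the union of the $\Phi$-orbit of $X$); and every element of $G_{right}\leq\aut(\mathcal{A})$ induces the identity algebraic automorphism.

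For the inclusion $\mathcal{A}^{\Phi}\subseteq V(M,G)$ I would show that $R(X^{\Phi})$ is invariant under $M$ for every $X\in\mathcal{S}(\mathcal{A})$. Indeed, $R(X^{\Phi})=\bigcup_{\varphi\in\Phi}R(X^{\varphi})$; an element $a\in\aut(\mathcal{A})$ fixes each $R(X^{\varphi})$ because $X^{\varphi}\in\mathcal{S}(\mathcal{A})$, and an element $f\in K$ satisfies $R(X^{\varphi})^f=R(X^{\varphi\varphi_f})$ by definition of $\varphi_f$, so $R(X^{\Phi})^f=\bigcup_{\varphi\in\Phi}R(X^{\varphi\varphi_f})=R(X^{\Phi})$ since $\Phi\varphi_f=\Phi$. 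As $M$ is generated by $\aut(\mathcal{A})$ and $K$, it follows that $R(X^{\Phi})^m=R(X^{\Phi})$ for all $m\in M$. Applying this to $m\in M_e$ and to the edge $(e,x)\in R(X^{\Phi})$ yields $x^m\in X^{\Phi}$, so every $M_e$-orbit lies inside a single set $X^{\Phi}$; equivalently $\mathcal{A}^{\Phi}\subseteq V(M,G)$.

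For the reverse inclusion I would prove that $M_e$ is transitive on each $X^{\Phi}$. The subtle point is that an $f\in K$ need not fix $e$: to repair this, let $r\in G_{right}$ be the right translation by $e^f$ and set $f'=fr^{-1}$. Then $f'\in M_e$, and $\varphi_{f'}=\varphi_f\varphi_{r^{-1}}=\varphi_f$ because $\varphi_{r^{-1}}=\id$; since $f'$ fixes $e$, for $x\in X$ the edge $(e,x^{f'})=(e,x)^{f'}$ lies in $R(X)^{f'}=R(X^{\varphi_f})$, so $X^{f'}\subseteq X^{\varphi_f}$, and $|X^{f'}|=|X|=|X^{\varphi_f}|$ forces $X^{f'}=X^{\varphi_f}$ for every $X\in\mathcal{S}(\mathcal{A})$. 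Since $\mathcal{A}$ is schurian, $\aut(\mathcal{A})_e$ is transitive on each $X\in\mathcal{S}(\mathcal{A})$, and $\aut(\mathcal{A})_e\leq M_e$; hence each such $X$ lies in one $M_e$-orbit, and the maps $f'$ move $X$ into the $M_e$-orbit of $X^{\varphi_f}$ for every $f\in K$. Therefore the whole of $X^{\Phi}=\bigcup_{f\in K}X^{\varphi_f}$ lies in one $M_e$-orbit, and combined with the previous paragraph this gives $\orb(M_e,G)=\{X^{\Phi}:X\in\mathcal{S}(\mathcal{A})\}$, hence $V(M,G)=\mathcal{A}^{\Phi}$, which is thus schurian.

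The only genuinely delicate point is this replacement of a combinatorial isomorphism $f\in K$ (which may move $e$) by the element $f'=fr^{-1}\in M_e$ inducing the same algebraic automorphism; it depends precisely on $G_{right}\leq\aut(\mathcal{A})$ and on right translations inducing the identity algebraic automorphism. Everything else is routine bookkeeping with the relation $R(X^{\varphi})^f=R(X^{\varphi\varphi_f})$ and with refinements of partitions.
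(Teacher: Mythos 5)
Your proof is correct and follows exactly the route the paper intends: the paper itself gives no argument beyond asserting the identity $\mathcal{A}^{\Phi}=V(\aut(\mathcal{A})K,G)$ ("one can verify that\ldots"), and your proposal simply carries out that verification, including the one genuinely needed observation that an $f\in K$ can be corrected by a right translation to an element of $M_e$ inducing the same algebraic automorphism. Nothing to object to.
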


\subsection{Wreath product and tensor products}

Let $\mathcal{A}$ be an $S$-ring over a group $G$ and $S=U/L$  an $\mathcal{A}$-section. The $S$-ring~$\mathcal{A}$ is called the \emph{$S$-wreath product}  if $L\trianglelefteq G$ and $L\leq\rad(X)$ for all basic sets $X$ outside~$U$. In this case we write 
$$\mathcal{A}=\mathcal{A}_U \wr_S \mathcal{A}_{G/L}.$$
The $S$-wreath product is called \emph{non-trivial} or \emph{proper}  if $e\neq L$ and $U\neq G$. If $U=L$ we say that $\mathcal{A}$ is the \emph{wreath product} of  $\mathcal{A}_L$ and $\mathcal{A}_{G/L}$ and write $\mathcal{A}=\mathcal{A}_L\wr\mathcal{A}_{G/L}$.

Let $\mathcal{A}_1$ and $\mathcal{A}_2$ be $S$-rings over groups $G_1$ and $G_2$ respectively. Then the set
$$\mathcal{S}=\mathcal{S}(\mathcal{A}_1)\times \mathcal{S}(\mathcal{A}_2)=\{X_1\times X_2:~X_1\in \mathcal{S}(\mathcal{A}_1),~X_2\in \mathcal{S}(\mathcal{A}_2)\} $$
forms a partition of  $G=G_1\times G_2$ that defines an  $S$-ring over $G$. This $S$-ring is called the  \emph{tensor product}  of $\mathcal{A}_1$ and $\mathcal{A}_2$ and denoted by $\mathcal{A}_1 \otimes \mathcal{A}_2$.

\begin{lemm} \label{schurtens}
The	 tensor product of two separable $S$-rings is separable.
\end{lemm}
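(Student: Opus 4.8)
The plan is to show that if $\mathcal{A}_1$ over $G_1$ and $\mathcal{A}_2$ over $G_2$ are separable, then $\mathcal{A}=\mathcal{A}_1\otimes\mathcal{A}_2$ over $G=G_1\times G_2$ is separable. So let $\varphi:\mathcal{A}\to\mathcal{A}'$ be an algebraic isomorphism onto an $S$-ring $\mathcal{A}'$ over a group $G'$ from $\mathcal{K}_A$; we must produce a combinatorial isomorphism inducing $\varphi$. The first step is to recover the tensor decomposition on the target side. The subgroups $G_1\times\{e\}$ and $\{e\}\times G_2$ are $\mathcal{A}$-subgroups, so their images $H_1=(G_1)^{\varphi}$ and $H_2=(G_2)^{\varphi}$ are $\mathcal{A}'$-subgroups; since $\varphi$ preserves orders and the lattice structure of $\mathcal{A}$-sets, $H_1\cap H_2=\{e\}$, $H_1H_2=G'$, and (because $G'$ is abelian) $G'=H_1\times H_2$. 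Moreover, because every basic set of $\mathcal{A}$ is a direct product $X_1\times X_2$, the radical computation shows $\mathcal{A}$ is simultaneously the $S_1$-wreath and $S_2$-wreath product structure forcing $\mathcal{A}'=\mathcal{A}'_{H_1}\otimes\mathcal{A}'_{H_2}$; one checks this by examining the basic sets $Y=(X_1\times X_2)^{\varphi}$ and using that $c^Z_{X,Y}$ for $X=X_1\times\{e\}$, $Y=\{e\}\times X_2$ and the analogous products are exactly $1$, which pins down each $Y$ as a product of an $\mathcal{A}'_{H_1}$-set and an $\mathcal{A}'_{H_2}$-set.

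The second step is to see that $\varphi$ restricts to algebraic isomorphisms on the factors. Taking $S_1=G_1/\{e\}$ and $S_2=G_2/\{e\}$ as $\mathcal{A}$-sections, $\varphi$ induces $\varphi_1=\varphi_{S_1}:\mathcal{A}_1\to\mathcal{A}'_{H_1}$ and $\varphi_2=\varphi_{S_2}:\mathcal{A}_2\to\mathcal{A}'_{H_2}$, both algebraic isomorphisms, and $\varphi=\varphi_1\otimes\varphi_2$ in the sense that $(X_1\times X_2)^{\varphi}=X_1^{\varphi_1}\times X_2^{\varphi_2}$. Since $H_1\leq G'$ and $H_2\leq G'$ are subgroups of an abelian group, both belong to $\mathcal{K}_A$. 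By separability of $\mathcal{A}_1$ there is a combinatorial isomorphism $f_1:G_1\to H_1$ with $\varphi_{f_1}=\varphi_1$, and likewise $f_2:G_2\to H_2$ with $\varphi_{f_2}=\varphi_2$.

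The third step is to glue: set $f=f_1\times f_2:G_1\times G_2\to H_1\times H_2=G'$, $(g_1,g_2)\mapsto(f_1(g_1),f_2(g_2))$. One then checks directly that $f$ is a combinatorial isomorphism from $\mathcal{A}$ to $\mathcal{A}'$: for a basic set $X=X_1\times X_2$, the edge set $R(X)$ is naturally identified with $R(X_1)\times R(X_2)$ under the coordinatewise bijection of $(G_1\times G_2)^2$ with $G_1^2\times G_2^2$, so $R(X)^f$ corresponds to $R(X_1)^{f_1}\times R(X_2)^{f_2}=R(X_1^{\varphi_1})\times R(X_2^{\varphi_2})=R(X_1^{\varphi_1}\times X_2^{\varphi_2})=R(X^{\varphi})$. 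Hence $\{R(X)^f:X\in\mathcal{S}(\mathcal{A})\}=\{R(X'):X'\in\mathcal{S}(\mathcal{A}')\}$, i.e. $f$ is a combinatorial isomorphism, and by construction it induces $\varphi$ on basic sets. Therefore $\mathcal{A}$ is separable.

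The main obstacle is the first step: one needs to be sure that an arbitrary algebraic isomorphism $\varphi$ out of a tensor product lands in a tensor product, i.e. that the target group splits as $H_1\times H_2$ compatibly with the partition. The key tools here are that $\varphi$ preserves orders of $\mathcal{A}$-sets and is a lattice isomorphism on $\mathcal{A}$-subgroups (both recorded in the excerpt), together with the structure-constant identities for products of a singleton-free set in one factor with one in the other factor, which force every image basic set to factor through $H_1\times H_2$. Everything after that — restriction to sections, invoking separability of the factors, and reassembling the combinatorial isomorphism — is routine, since edge sets of direct-product Cayley graphs over $G_1\times G_2$ are literally direct products of edge sets, so a product of combinatorial isomorphisms is a combinatorial isomorphism.
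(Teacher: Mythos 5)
Your argument is correct, but it is genuinely different from the paper's: the paper proves this lemma by citation only (it invokes Lemma~2.6 of the author's earlier paper on groups of order $4p$, which in turn rests on Theorem~1.20 of Evdokimov's thesis), whereas you give a self-contained direct proof. Your three steps all check out for the class $\mathcal{K}_A$: the images $H_1=G_1^{\varphi}$, $H_2=G_2^{\varphi}$ are $\mathcal{A}'$-subgroups with trivial intersection generating $G'$, and since $G'$ is abelian this gives $G'=H_1\times H_2$; the identity $\underline{X_1\times\{e\}}\cdot\underline{\{e\}\times X_2}=\underline{X_1\times X_2}$ transported through the ring isomorphism $\varphi$ pins down $(X_1\times X_2)^{\varphi}=X_1^{\varphi_1}\times X_2^{\varphi_2}$, so $\mathcal{A}'=\mathcal{A}'_{H_1}\otimes\mathcal{A}'_{H_2}$; and the coordinatewise identification $R(X_1\times X_2)\leftrightarrow R(X_1)\times R(X_2)$ makes $f_1\times f_2$ a combinatorial isomorphism inducing $\varphi$. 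Two remarks. First, your parenthetical claim that ``$\mathcal{A}$ is simultaneously the $S_1$-wreath and $S_2$-wreath product'' is false as stated (a tensor product is an $S$-wreath product over $U/L=G_i/\{e\}$ only when the complementary factor has full radical in every basic set outside $U$, which fails in general); fortunately this remark carries no weight, since the structure-constant computation that follows it is the actual, and correct, mechanism. Second, your direct proof exploits that $\mathcal{K}_A$ consists of abelian groups and is closed under subgroups (needed both for $G'=H_1\times H_2$ and for $H_1,H_2\in\mathcal{K}_A$); the cited theorem of Evdokimov is a more general statement about separability of tensor products of coherent configurations, so the paper's route buys generality at the cost of opacity, while yours is elementary, verifiable on the page, and entirely adequate for the use made of the lemma here.
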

	
\begin{proof}
As noted in~\cite[Lemma~2.6]{Ry2}, the statement of the lemma follows from~\cite[Theorem~1.20]{E}.
\end{proof}

\begin{lemm}\cite[Lemma 4.4]{Ry1}\label{sepwr}
Let $\mathcal{A}$ be the $S$-wreath product over an abelian group $G$ for some $\mathcal{A}$-section $S=U/L$. Suppose that $\mathcal{A}_U$ and $\mathcal{A}_{G/L}$ are separable and $\aut(\mathcal{A}_U)^S=\aut(\mathcal{A}_S)$. Then $\mathcal{A}$ is separable. In particular, the wreath product of two separable $S$-rings is separable.
\end{lemm}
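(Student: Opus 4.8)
Let $\varphi\colon\mathcal{A}\to\mathcal{A}'$ be an algebraic isomorphism onto an $S$-ring over an abelian group $G'$; the plan is to transport the $S$-wreath decomposition to $\mathcal{A}'$, solve the two resulting isomorphism problems by the separability of the factors, synchronize the two solutions on the section $S$ by means of the hypothesis $\aut(\mathcal{A}_U)^{S}=\aut(\mathcal{A}_S)$, and then glue. Put $U'=U^{\varphi}$, $L'=L^{\varphi}$, $S'=S^{\varphi}=U'/L'$; here $L'\trianglelefteq G'$ because $G'$ is abelian. Since $\varphi$ carries the lattice of $\mathcal{A}$-subgroups isomorphically onto that of $\mathcal{A}'$-subgroups and commutes with the operators $\langle\cdot\rangle$ and $\rad$, the defining property ``$L\leq\rad(X)$ for every basic set $X\not\subseteq U$'' transfers, so $\mathcal{A}'=\mathcal{A}'_{U'}\wr_{S'}\mathcal{A}'_{G'/L'}$, and $\varphi$ induces algebraic isomorphisms $\varphi_U\colon\mathcal{A}_U\to\mathcal{A}'_{U'}$ and $\varphi_{G/L}\colon\mathcal{A}_{G/L}\to\mathcal{A}'_{G'/L'}$ which restrict to one and the same $\varphi_S\colon\mathcal{A}_S\to\mathcal{A}'_{S'}$ on $S$. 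Two facts, both consequences of $U$ and $L$ being $\mathcal{A}$-subgroups, will organize the gluing: every basic set of $\mathcal{A}$ lies in $U$ or is disjoint from $U$; consequently, for a basic set $X\subseteq U$ the edges of $R(X)$ run only inside cosets of $U$, whereas for a basic set $X\not\subseteq U$ (so $L\leq\rad(X)$) they run only between distinct cosets of $U$ and are completely determined by the image of $X$ in $G/L$.

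By separability of $\mathcal{A}_U$ and of $\mathcal{A}_{G/L}$ there are combinatorial isomorphisms $f_1\colon\mathcal{A}_U\to\mathcal{A}'_{U'}$ and $f_2\colon\mathcal{A}_{G/L}\to\mathcal{A}'_{G'/L'}$ inducing $\varphi_U$ and $\varphi_{G/L}$. Fix a transversal $T$ of $U$ in $G$. For each $t\in T$ the map $f_2$, restricted to the coset of $U/L$ in $G/L$ represented by $t$ and normalized by the ambient translations, is a combinatorial isomorphism $\mathcal{A}_S\to\mathcal{A}'_{S'}$ inducing $\varphi_S$; call its shape $\psi_t$. Then $\psi_t$ and $f_1^{S}$ are two such isomorphisms, so a suitable composite of them lies in $\aut(\mathcal{A}_S)$; here I use $\aut(\mathcal{A}_U)^{S}=\aut(\mathcal{A}_S)$ to lift that composite to some $g_t\in\aut(\mathcal{A}_U)$ and set $h_t:=g_tf_1$ (in the order making $h_t^{S}=\psi_t$). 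Each $h_t$ is again a combinatorial isomorphism $\mathcal{A}_U\to\mathcal{A}'_{U'}$ inducing $\varphi_U$, because $g_t$ induces the identity algebraic automorphism, and its shape on $S$ is now exactly $\psi_t$.

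Now define $f\colon G\to G'$ by $f(tu)=c_th_t(u)$ for $t\in T$ and $u\in U$, where $c_t\in G'$ lifts the coset $f_2(\pi(t))$ with $\pi\colon G\to G/L$ the canonical epimorphism; the equality of shapes $h_t^{S}=\psi_t$ is precisely what makes $f$ well defined as a lift of $f_2$, and $f$ is a bijection because the cosets $c_tU'$, $t\in T$, are pairwise distinct and hence exhaust $G'$. It remains to verify that $f$ is a combinatorial isomorphism $\mathcal{A}\to\mathcal{A}'$ inducing $\varphi$. For a basic set $X\subseteq U$, an edge of $R(X)$ lies in some coset $tU$ and, after translating by $t^{-1}$ on the left (source) and by $c_t^{-1}$ on the left (target), becomes an $\mathcal{A}_U$-edge which $h_t$ sends to an $\mathcal{A}'_{U'}$-edge of $R(X^{\varphi})$; for a basic set $X\not\subseteq U$, $R(X)$ is determined by the image of $X$ in $G/L$, and since $f$ descends to $f_2$ it is carried to $R(X^{\varphi})$. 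As the algebraic isomorphism induced by $f$ agrees with $\varphi$ on $\mathcal{A}_U$-sets and on $\mathcal{A}_{G/L}$-sets, which together generate all $\mathcal{A}$-sets, it equals $\varphi$. Finally, the ``in particular'' clause is the special case $U=L$: then $S$ is the trivial section, so $\aut(\mathcal{A}_S)=1=\aut(\mathcal{A}_U)^{S}$ holds automatically, $\mathcal{A}_U=\mathcal{A}_L$, and the above applies.

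\textbf{Main obstacle.} The substantive part is the synchronization and gluing of the last two paragraphs: assigning a correct meaning to the ``shape'' $\psi_t$ of $f_2$ on each coset of $U/L$, checking it really is a combinatorial isomorphism of $\mathcal{A}_S$ inducing $\varphi_S$, and choosing the corrections $g_t$ and the representatives $c_t$ consistently so that the spliced map $f$ is simultaneously a lift of $f_2$ and, on every coset of $U$, a translate of a combinatorial isomorphism of $\mathcal{A}_U$. This is exactly where the condition $\aut(\mathcal{A}_U)^{S}=\aut(\mathcal{A}_S)$ is indispensable; everything else is bookkeeping with the lattice of $\mathcal{A}$-sections and the elementary properties of algebraic isomorphisms recorded in Section~2.
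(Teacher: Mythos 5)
The paper states this lemma without proof, citing \cite[Lemma~4.4]{Ry1}, so there is no in-text argument to compare against; your proposal reconstructs essentially the gluing argument used in that reference. The plan is correct: transporting the $S$-wreath decomposition along $\varphi$, solving the two sub-problems by separability of $\mathcal{A}_U$ and $\mathcal{A}_{G/L}$, using $\aut(\mathcal{A}_U)^S=\aut(\mathcal{A}_S)$ to correct $f_1$ coset by coset so that its shape on $S$ matches that of $f_2$, and then splicing $f(tu)=c_th_t(u)$, is exactly the right mechanism, and the routine points you defer (that $f_1$ and the $h_t$ map $L$-cosets to $L'$-cosets so that $h_t^S$ is defined, that $\psi_t$ is a combinatorial isomorphism inducing $\varphi_S$, and that the cosets $c_tU'$ exhaust $G'$) all check out.
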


Let $\Omega$ be a finite set. Permutation groups $K,~K^{'}\leq \sym(\Omega)$   are called  2-\emph{equivalent} if  $\orb(K,\Omega^2)=\orb(K^{'},\Omega^2)$. A permutation group  $K\leq \sym(\Omega)$ is called 2-\emph{isolated} if  it is the only group which is 2-equivalent to $K$.

\begin{lemm}\label{2isol1}
 Let $\mathcal{A}$ be the $S$-wreath product over an abelian group $G$ for some $\mathcal{A}$-section $S=U/L$. Suppose that $\mathcal{A}_U$ and $\mathcal{A}_{G/L}$ are separable, $\mathcal{A}_U$ is schurian, and the group $\aut(\mathcal{A}_S)$ is 2-isolated. Then $\mathcal{A}$ is separable.
\end{lemm}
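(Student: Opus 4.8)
The plan is to reduce Lemma~\ref{2isol1} to Lemma~\ref{sepwr} by upgrading the hypotheses. Recall that Lemma~\ref{sepwr} already gives separability of an $S$-wreath product $\mathcal{A}=\mathcal{A}_U\wr_S\mathcal{A}_{G/L}$ provided $\mathcal{A}_U$ and $\mathcal{A}_{G/L}$ are separable \emph{and} $\aut(\mathcal{A}_U)^S=\aut(\mathcal{A}_S)$. In the present lemma we are given separability of $\mathcal{A}_U$ and $\mathcal{A}_{G/L}$ together with schurity of $\mathcal{A}_U$ and 2-isolatedness of $\aut(\mathcal{A}_S)$ in place of the equality $\aut(\mathcal{A}_U)^S=\aut(\mathcal{A}_S)$. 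So it suffices to show that these substitute hypotheses force $\aut(\mathcal{A}_U)^S=\aut(\mathcal{A}_S)$, after which we simply invoke Lemma~\ref{sepwr}.

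First I would recall the standard general inclusion $\aut(\mathcal{A}_U)^S\leq\aut(\mathcal{A}_S)$, noted in the excerpt (for any $\mathcal{A}$-section $S$ of any $S$-ring, $\aut(\mathcal{A})^S\leq\aut(\mathcal{A}_S)$). Hence only the reverse inclusion is at stake. The idea is to compare the two permutation groups $K:=\aut(\mathcal{A}_U)^S$ and $K':=\aut(\mathcal{A}_S)$, both acting on the point set $S=U/L$, and prove they are 2-equivalent; then 2-isolatedness of $K'=\aut(\mathcal{A}_S)$ will immediately give $K=K'$. To see 2-equivalence, observe that $\orb(K',S^2)=\{R(Y):Y\in\mathcal{S}(\mathcal{A}_S)\}$ by schurity-type considerations — more precisely $\aut(\mathcal{A}_S)$ always has the basic relations of $\mathcal{A}_S$ among its orbitals, and these are exactly the orbitals since $\aut(\mathcal{A}_S)\geq S_{right}$ and the $2$-orbits of $\aut(\mathcal{A}_S)$ refine the basic relations while the Cayley-scheme structure forces equality only in the schurian case; so I will instead argue that $\orb(K,S^2)$ already equals $\{R(Y):Y\in\mathcal{S}(\mathcal{A}_S)\}$. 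Here is where schurity of $\mathcal{A}_U$ enters: since $\mathcal{A}_U$ is schurian, $\mathcal{S}(\mathcal{A}_U)=\orb(\aut(\mathcal{A}_U)_e,U)$ by (2), and projecting along $\pi:U\to S$ one gets that the orbitals of $\aut(\mathcal{A}_U)^S$ on $S$ are precisely $\{R(X^\pi):X\in\mathcal{S}(\mathcal{A}_U),\ X\subseteq U\}=\{R(Y):Y\in\mathcal{S}(\mathcal{A}_S)\}$, the last equality being the definition of $\mathcal{A}_S$. Thus $K$ and $K'$ have the same orbitals on $S^2$, i.e. they are 2-equivalent, and 2-isolatedness of $K'$ yields $K=K'$.

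The technical core — and the step I expect to be the main obstacle — is justifying the claim that the orbitals of $\aut(\mathcal{A}_U)^S$ on $S$ are exactly the basic relations of $\mathcal{A}_S$, i.e. that passing to the section does not "coarsen" the orbit structure. One has to be careful: $\aut(\mathcal{A}_U)^S$ by definition consists of the induced bijections of those automorphisms of $\mathcal{A}_U$ that stabilize the section $S=U/L$; since $U$ and $L$ are $\mathcal{A}_U$-subgroups, every element of $\aut(\mathcal{A}_U)$ stabilizes them setwise (as $\aut(\mathcal{A}_U)$ fixes the basic sets of $\mathcal{A}_U$, hence the $\mathcal{A}_U$-sets, hence $U$ and $L$), so $S^f=S$ for all $f\in\aut(\mathcal{A}_U)$ and the projection $\aut(\mathcal{A}_U)\to\aut(\mathcal{A}_U)^S$ is a full homomorphism onto a transitive group containing $S_{right}$. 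Transitivity plus $S_{right}\leq\aut(\mathcal{A}_U)^S$ lets one describe the orbitals of $\aut(\mathcal{A}_U)^S$ via the orbits of a point stabilizer on $S$; combining with schurity of $\mathcal{A}_U$ and the compatibility of $\pi$ with the partitions, one identifies these with $\mathcal{S}(\mathcal{A}_S)$. Once this identification is in hand, the rest is the short 2-equivalence / 2-isolatedness argument above followed by a citation of Lemma~\ref{sepwr}. I would therefore structure the write-up as: (i) every $f\in\aut(\mathcal{A}_U)$ preserves $S$, so $\aut(\mathcal{A}_U)^S$ is well defined and contains $S_{right}$; (ii) schurity of $\mathcal{A}_U$ $\Rightarrow$ $\orb\big((\aut(\mathcal{A}_U)^S)_\alpha,S\big)$ projects the basic sets of $\mathcal{A}_U$ inside $U$, whence $\aut(\mathcal{A}_U)^S$ and $\aut(\mathcal{A}_S)$ are 2-equivalent; (iii) 2-isolatedness of $\aut(\mathcal{A}_S)$ $\Rightarrow$ $\aut(\mathcal{A}_U)^S=\aut(\mathcal{A}_S)$; (iv) apply Lemma~\ref{sepwr}.
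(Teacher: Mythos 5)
Your proposal is correct and follows essentially the same route as the paper: establish that $\aut(\mathcal{A}_U)^S$ and $\aut(\mathcal{A}_S)$ are $2$-equivalent via schurity of $\mathcal{A}_U$ (both orbital sets equal $\{R(X):X\in\mathcal{S}(\mathcal{A}_S)\}$), invoke $2$-isolatedness to get $\aut(\mathcal{A}_U)^S=\aut(\mathcal{A}_S)$, and then apply Lemma~\ref{sepwr}. The paper's proof is just a terser version of your steps (ii)--(iv).
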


\begin{proof}
Since $\mathcal{A}_U$ is schurian, the groups $\aut(\mathcal{A}_U)^S$ and $\aut(\mathcal{A}_S)$ are 2-equivalent. Indeed, 
$$\orb(\aut(\mathcal{A}_U)^S, S^2)=\orb(\aut(\mathcal{A}_S),S^2)=\{R(X): X\in \mathcal{S}(\mathcal{A}_S)\}.$$
This implies that $\aut(\mathcal{A}_U)^S=\aut(\mathcal{A}_S)$ because $\aut(\mathcal{A}_S)$ is 2-isolated. Therefore the conditions of Lemma~\ref{sepwr} hold and $\mathcal{A}$ is separable.
\end{proof}

\begin{lemm}\label{nonsepwr}
Let $H$ be a normal subgroup of a group $G$, $\mathcal{B}$  an $S$-ring over $H$, $\varphi\in \aut_{\alg}(\mathcal{B})\setminus \aut_{\alg}(\mathcal{B})_0$. Then there exists  $\psi\in \aut_{\alg}(\mathcal{A})\setminus \aut_{\alg}(\mathcal{A})_0$, where $\mathcal{A}=\mathcal{B}\wr \mathbb{Z}(G/H)$, such that $\psi^H=\varphi$.
\end{lemm}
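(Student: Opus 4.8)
The plan is to build $\psi$ as an algebraic automorphism of $\mathcal{A}=\mathcal{B}\wr\mathbb{Z}(G/H)$ that acts as $\varphi$ on the $\mathcal{A}$-section $H$ (equivalently on the first factor of the wreath product) and trivially on the section $G/H$, and then to show that if $\psi$ were induced by a combinatorial isomorphism $f\in\iso(\mathcal{A})$, the restriction/projection of $f$ to $H$ would induce $\varphi$, contradicting $\varphi\notin\aut_{\alg}(\mathcal{B})_0$. First I would recall the basic-set structure of a wreath product $\mathcal{A}=\mathcal{B}\wr\mathbb{Z}(G/H)$: the basic sets are exactly the basic sets of $\mathcal{B}$ (those contained in $H$) together with the full $H$-cosets $Hg$ for $g\notin H$ — each such coset being a single basic set because $\mathbb{Z}(G/H)$ has the full partition into points, so $H\leq\rad(X)$ for every basic set $X$ outside $H$. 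Then I would define $\psi$ on $\mathcal{S}(\mathcal{A})$ by $X\mapsto X^{\varphi}$ for $X\subseteq H$ and $Hg\mapsto Hg$ for the cosets, and check that this is an algebraic automorphism: the structure constants $c^Z_{X,Y}$ split into the ``inside $H$'' ones, which match by hypothesis that $\varphi$ is an algebraic automorphism of $\mathcal{B}$, and the ``coset-level'' ones, which are governed by the trivial $S$-ring $\mathbb{Z}(G/H)$ and are fixed because $\psi$ fixes each coset; the mixed products (a subset of $H$ times a coset) land inside single cosets and are again controlled by $|X|$, which $\varphi$ preserves. This gives $\psi\in\aut_{\alg}(\mathcal{A})$ with $\psi^H=\varphi$.

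Next I would argue $\psi\notin\aut_{\alg}(\mathcal{A})_0$. Suppose for contradiction that $\psi=\varphi_f$ for some $f\in\iso(\mathcal{A})$, where by the preliminaries $f$ is a bijection $G\to G$ (since $\psi$ fixes $G$ as an $\mathcal{A}$-set, the target group is again $G$) with $\{R(X)^f:X\in\mathcal{S}(\mathcal{A})\}=\{R(X):X\in\mathcal{S}(\mathcal{A})\}$. Because $H$ is an $\mathcal{A}$-subgroup and $\psi$ fixes $H$ and each $H$-coset $Hg$, the isomorphism $f$ must respect the partition of $G$ into $H$-cosets (it sends $R(Hg)$ to some $R(Hg')$, and comparing with the $G/H$-level forces $g'$ in the same coset-class pattern); more precisely, since $\mathcal{A}$ is the $S$-wreath product for $S=H/\{e\}$ with $L=\{e\}$, one has the standard fact that $\aut(\mathcal{A})$ and $\iso(\mathcal{A})$ induce well-defined actions on the section $H$, so $f$ induces a bijection $f^{H}:H\to H$ lying in $\iso(\mathcal{B})$, with $\varphi_{f^{H}}=\psi^{H}=\varphi$. (Here I would use that, for the wreath product, $\iso(\mathcal{A})^{H}\le\iso(\mathcal{A}_H)=\iso(\mathcal{B})$, the analogue of the inclusion $\aut(\mathcal{A})^{S}\le\aut(\mathcal{A}_S)$ stated in Subsection~2.2, together with the fact that an isomorphism restricts along $\mathcal{A}$-sections compatibly with the induced algebraic isomorphism.) This yields $\varphi\in\aut_{\alg}(\mathcal{B})_0$, a contradiction.

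The main obstacle I expect is the bookkeeping in the second paragraph: making rigorous that a combinatorial isomorphism $f$ of the wreath product $\mathcal{A}$ genuinely induces a combinatorial isomorphism $f^{H}$ of $\mathcal{B}$ on the section $H$ realizing $\varphi$. The delicate point is that $f$ need not fix $H$ setwise a priori — it sends $H=R$-component of the basic sets inside $H$ to another $\mathcal{A}$-subgroup of the same size — but since $\psi$ fixes $H$ as an $\mathcal{A}$-set and $f$ induces $\psi$, $f$ maps the union of basic sets inside $H$ to itself, hence $H^{f}=H$; one then checks $f|_{H}$ (or rather the bijection of $H$ it induces, after composing with a translation so that $e^{f}=e$, which is harmless since $\aut(\mathcal{A})\geq G_{right}$) is an isomorphism from $\mathcal{B}$ to $\mathcal{B}$ inducing $\varphi$. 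Once this ``restriction of isomorphisms along the wreath section'' lemma is in place, the rest is the routine verification of the structure constants for $\psi$, which I would not spell out in full.
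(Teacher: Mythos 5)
Your proposal is correct and follows essentially the same route as the paper: the same definition of $\psi$ (acting as $\varphi$ on basic sets inside $H$ and fixing the $H$-cosets outside), the same case-by-case verification of the structure constants, and the same contradiction via restricting a hypothetical inducing isomorphism to $H$. The only difference is that for the last step the paper simply cites \cite[Lemma~3.4]{EP5}, whereas you reconstruct that restriction argument by hand (normalizing $f$ by a right translation so that $e^f=e$ and then checking $H^f=H$), which is a valid substitute.
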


\begin{proof}
Define $\psi$ as follows: $X^{\psi}=X^{\varphi}$ for $X\in \mathcal{S}(\mathcal{A}_H)$ and $X^{\psi}=X$ for $X\in \mathcal{S}(\mathcal{A})\setminus\mathcal{S}(\mathcal{A}_H)$. Let us prove that $\psi\in \aut_{\alg}(\mathcal{A})$. To do this it suffices to check that $c_{X^{\psi},Y^{\psi}}^{Z^{\psi}}=c_{X,Y}^Z$ for all $X,Y,Z\in \mathcal{S}(\mathcal{A})$. Suppose that $X,Y\in \mathcal{S}(\mathcal{A}_H)$. If $Z\in \mathcal{S}(\mathcal{A}_H)$ then $c_{X^{\psi},Y^{\psi}}^{Z^{\psi}}=c_{X^{\varphi},Y^{\varphi}}^{Z^{\varphi}}=c_{X,Y}^Z$. If $Z\notin \mathcal{S}(\mathcal{A}_H)$ then $Z^{\psi}\notin \mathcal{S}(\mathcal{A}_H)$ and hence $c_{X^{\psi},Y^{\psi}}^{Z^{\psi}}=c_{X,Y}^Z=0$. 

Now suppose that exactly one of the sets $X,Y$, say $X$, lies inside $H$. Then $Y^{\psi}=Y$ and $X\cup X^{\psi}\subseteq H\leq \rad(Y)$. So $\underline{X}\underline{Y}=\underline{X}^{\psi}\underline{Y}=|X|\underline{Y}$. This implies that $c_{X^{\psi},Y^{\psi}}^{Z^{\psi}}=c_{X,Y}^Z=|X|$ whenever $Z=Y$ and $c_{X^{\psi},Y^{\psi}}^{Z^{\psi}}=c_{X,Y}^Z=0$ otherwise.

Finally, suppose that $X,Y \notin \mathcal{S}(\mathcal{A}_H)$. In this case $X^{\psi}=X$ and $Y^{\psi}=Y$. If $Z\notin \mathcal{S}(\mathcal{A}_H)$ then $Z^{\psi}=Z$ and hence $c_{X^{\psi},Y^{\psi}}^{Z^{\psi}}=c_{X,Y}^Z$. If $Z\in \mathcal{S}(\mathcal{A}_H)$ then $Z$ and $Z^{\psi}$ enter the element $\underline{X}\underline{Y}$ with the same coefficients because $H=\rad(X)\cap \rad(Y)$. Therefore $c_{X^{\psi},Y^{\psi}}^{Z^{\psi}}=c_{X,Y}^Z$. Thus, $\psi\in \aut_{\alg}(\mathcal{A})$.

If $\psi$ is induced by an isomorphism  then \cite[Lemma~3.4]{EP5} implies that $\psi^H=\varphi$ is also induced by an isomorphism. We obtain a contradiction with the assumption of the lemma and the lemma is proved. 
\end{proof}

\section{$S$-rings over cyclic $p$-groups}

In this section we prove Theorem~\ref{main1}. Before the proof we recall some results on $S$-rings over cyclic $p$-groups. The most of them can be found in~\cite{EP2, EP4}. Throughout the section $p$ is an odd prime, $G$ is a cyclic $p$-group and $\mathcal{A}$ is an $S$-ring over $G$.  We say that $X\in \mathcal{A}$ is \emph{highest} if $X$ contains a generator of $G$. Put   $\rad(\mathcal{A})=\rad(X)$, where $X$ is highest. Note that $\rad(\mathcal{A})$ does not depend on the choice of $X$ because every two basic sets are rationally conjugate and hence have the same radicals.

\begin{lemm}\label{p1}
The $S$-ring $\mathcal{A}$ is schurian and one of the following statements holds for $\mathcal{A}$:

$(1)$ $|\rad(\mathcal{A})|=1$ and $\rk(\mathcal{A})=2$;

$(2)$ $|\rad(\mathcal{A})|=1$, $\mathcal{A}$ is normal, and $\mathcal{A}=\cyc(K,G)$ for some $K\leq K_0$, where $K_0$ is the subgroup of $\aut(G)$ of order $p-1$;

$(3)$ $|\rad(\mathcal{A})|>1$ and $\mathcal{A}$ is the proper generalized wreath product.

\end{lemm}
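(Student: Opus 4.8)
The plan is to prove Lemma~\ref{p1} by induction on $|G|$, treating the dichotomy on $|\rad(\mathcal{A})|$ as the organizing principle. First I would recall the standard structure theory of $S$-rings over cyclic groups: every basic set of $\mathcal{A}$ is a union of cosets of $\rad(X)$, and for a cyclic $p$-group the $\mathcal{A}$-subgroups form a chain $1 = H_0 < H_1 < \dots < H_m = G$ (a subchain of the full subgroup lattice of $G$). The key dichotomy is whether $\mathcal{A}$ contains a basic set $X$ with $\rad(X) = 1$ that is \emph{highest} (i.e. contains a generator of $G$). If $|\rad(\mathcal{A})| = 1$, then the highest basic set $X$ satisfies $\rad(X)=1$, and since $X$ is a union of $\langle X\rangle$-cosets... more precisely, since $G$ is abelian, $X$ is rationally conjugate-closed and $\langle X\rangle = G$; a counting/burnside argument on the action of the multiplier group shows that $X$ together with its rational conjugates partition the set of generators, and in fact the subgroup $K\leq\aut(G)$ of multipliers stabilizing $X$ (setwise) has order dividing $p-1$ because a $p$-element multiplier fixing a generator would have to be trivial on $G/\Phi(G)$, forcing triviality on the cyclic $p$-group. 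This is where cases (1) and (2) split: if all nonidentity basic sets inside the generator-layer are singletons one gets rank-related behavior, and more carefully one shows $\mathcal{A} = \cyc(K, G)$ with $K\leq K_0$, $|K_0| = p-1$, giving normality via equation~(1) in the excerpt, hence case (1) or (2).

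Next, for the case $|\rad(\mathcal{A})| = e' > 1$: let $L = \rad(\mathcal{A})$, which is a nontrivial $\mathcal{A}$-subgroup. Every basic set $X$ with $\langle X\rangle = G$ has $L \leq \rad(X)$ by definition, and for basic sets $X$ with $\langle X\rangle = U < G$ one uses that $U$ is an $\mathcal{A}$-subgroup sitting in the chain; one then identifies the smallest $\mathcal{A}$-subgroup $U$ containing all basic sets not radical-containing-$L$, and checks that $\mathcal{A}$ is the $U/L$-wreath product $\mathcal{A}_U \wr_{U/L} \mathcal{A}_{G/L}$ — the condition $L\trianglelefteq G$ is automatic ($G$ abelian) and $L\leq\rad(X)$ for basic sets outside $U$ holds by the choice of $L$. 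Properness ($1 \neq L$ and $U\neq G$) needs $U\neq G$, which holds because if $U = G$ then some basic set generating $G$ would have radical exactly... one argues $\rad(\mathcal{A})$ being the common radical of highest sets forces $U\leq$ the appropriate proper subgroup; the standard fact here is that for cyclic $p$-groups a nontrivial radical forces a genuine generalized wreath decomposition (this is essentially the Leung--Man / Evdokimov--Ponomarenko structure theorem for circulant $S$-rings restricted to prime-power order). Schurianity in all three cases is then handled: in cases (1),(2) directly ($\mathbb{Z}G$ and cyclotomic rings are schurian, as stated in the excerpt), and in case (3) by induction — both $\mathcal{A}_U$ and $\mathcal{A}_{G/L}$ are $S$-rings over smaller cyclic $p$-groups hence schurian by the inductive hypothesis, and schurianity is preserved under the generalized wreath product over cyclic $p$-groups (a known lemma, citable from \cite{EP2, EP4}).

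The main obstacle I anticipate is the case $|\rad(\mathcal{A})| = 1$: showing that we land \emph{exactly} in case (1) or (2) and not something in between requires the genuinely nontrivial fact that an $S$-ring over a cyclic $p$-group ($p$ odd) with trivial radical must be cyclotomic of the special form $\cyc(K,G)$ with $K$ cyclic of order dividing $p-1$. This uses the classification of such $S$-rings and the fact that $\aut(G)$ is cyclic for odd $p$; in particular one needs that the multiplier group acting on the top generator-layer is the whole story, which relies on the coincidence of radicals across all basic sets and a descent argument on the quotients $G/H_i$. I would quote this from the cited literature (\cite{EP2, EP4}) rather than reprove it, since the stated scope of Section~3 is to \emph{recall} such results. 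The wreath-product verification in case (3) and the schurianity bookkeeping are then routine given the inductive setup and the lemmas already assembled in Section~2.
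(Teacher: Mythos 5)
The paper's own proof of this lemma is a pure citation: schurianity is the main result of \cite{Po}, and the trichotomy is \cite[Theorems~4.1, 4.2(1)]{EP4} together with \cite[Lemma~5.1(1)]{EP2}. Your proposal ultimately defers the two genuinely hard ingredients (the classification in the trivial-radical case and the existence of a proper wreath decomposition in the nontrivial-radical case) to the same sources, which is legitimate; but the inductive scaffolding you build around them has a real gap at the schurianity step. You claim that in case (3) schurianity of $\mathcal{A}$ follows from schurianity of $\mathcal{A}_U$ and $\mathcal{A}_{G/L}$ because ``schurianity is preserved under the generalized wreath product over cyclic $p$-groups (a known lemma, citable from \cite{EP2,EP4})''. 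No such lemma is available in that form: the $S$-wreath product of two schurian circulant $S$-rings need not be schurian --- there exist non-schurian circulant $S$-rings that are $S$-wreath products of schurian ones, and determining \emph{when} the generalized wreath product is schurian is the main theme of \cite{EP4}, not a routine lemma extractable from it. That every $S$-ring over a cyclic $p$-group ($p$ odd) is schurian is P\"oschel's theorem \cite{Po} and must be invoked directly; it cannot be recovered by the induction you describe without first establishing a nontrivial schurity criterion for the $S$-wreath product, which would be circular in difficulty.

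A smaller but genuine flaw: in the trivial-radical case you argue that the multiplier group $K$ stabilizing a highest basic set has order dividing $p-1$ because ``a $p$-element multiplier fixing a generator would have to be trivial on $G/\Phi(G)$, forcing triviality on the cyclic $p$-group''. For $G\cong C_{p^n}$ every automorphism $x\mapsto x^m$ with $m\equiv 1\pmod{p}$ --- that is, every $p$-element of $\aut(G)$ --- is trivial on $G/\Phi(G)$ without being trivial on $G$, so the inference does not go through. The correct statement, that a trivial-radical $S$-ring of rank greater than $2$ over $C_{p^n}$ is normal and equals $\cyc(K,G)$ with $K$ contained in the subgroup of order $p-1$, is exactly the content of the cited classification and is not recovered by this local argument. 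Since you do say you would quote that classification rather than reprove it, this does not by itself sink the proposal, but as written your sketch of why the trivial-radical case splits into exactly (1) and (2) is not a proof.
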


\begin{proof}
The $S$-ring $\mathcal{A}$ is schurian by the main result of~\cite{Po}. The other statements of the lemma follow from~\cite[Theorem~4.1, Theorem~4.2 (1)]{EP4} and~\cite[Lemma~5.1, (1)]{EP2}.
\end{proof}

\begin{lemm}\label{section}
Let $S$ be an $\mathcal{A}$-section with $|S|\geq p^2$. The following statements hold:

(1) If Statement~2 of Lemma~\ref{p1} holds for $\mathcal{A}$ then Statement~2 of Lemma~\ref{p1} holds for $\mathcal{A}_S$; 

(2) If $\rk(\mathcal{A}_S)=2$ then $\aut(\mathcal{A})^S=\sym(S)$.
\end{lemm}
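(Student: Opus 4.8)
\textbf{Proof strategy for Lemma~\ref{section}.}

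The plan is to derive both statements from the structural trichotomy in Lemma~\ref{p1} applied to the $S$-ring $\mathcal{A}_S$ over the cyclic $p$-group $S$, combined with the way basic sets and their radicals behave under restriction to an $\mathcal{A}$-section. First I would record the general fact that if $X\in\mathcal{S}(\mathcal{A})$ is highest (contains a generator of $G$) and $S=U/L$ is an $\mathcal{A}$-section, then the image $X^\pi$ of $X\cap U$ under $\pi:U\to U/L$ is a highest basic set of $\mathcal{A}_S$ as soon as $U$ itself is generated by $\mathcal{A}$-elements meeting a generator; more to the point, since every two basic sets of $\mathcal{A}$ are rationally conjugate and this property passes to $\mathcal{A}_S$, the quotient $S$-ring $\mathcal{A}_S$ again has the feature that all its basic sets share a common radical, so $\rad(\mathcal{A}_S)$ is well defined and Lemma~\ref{p1} applies verbatim to $\mathcal{A}_S$.

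For statement~(1): assume Statement~2 of Lemma~\ref{p1} holds for $\mathcal{A}$, so $|\rad(\mathcal{A})|=1$, $\mathcal{A}$ is normal, and $\mathcal{A}=\cyc(K,G)$ with $K\le K_0$ cyclic of order dividing $p-1$. The key point is that the trivial radical is inherited: a cyclotomic $S$-ring $\cyc(K,G)$ restricts on any section $S$ to $\cyc(K^S,S)$, and since $K\le\aut(G)$ acts on the cyclic $p$-group $G$ in the standard way, the induced action $K^S$ on $S$ lies in the analogous subgroup of order dividing $p-1$ of $\aut(S)$. Hence $\mathcal{A}_S$ is cyclotomic with automorphism group normalizing $S_{right}$, i.e. $\mathcal{A}_S$ is normal. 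It remains to exclude $\rk(\mathcal{A}_S)=2$ and $|\rad(\mathcal{A}_S)|>1$: the former would force $K^S$ to be transitive on $S\setminus\{e\}$, which is impossible for $|S|\ge p^2$ since $|K^S|\le p-1<|S|-1$; the latter is impossible because a nontrivial radical would mean some (hence every) basic set of $\mathcal{A}_S$ has nontrivial radical, contradicting that the $K^S$-orbits on the set of generators of $S$ have radical $\{e\}$ (a consequence of $|\rad(\mathcal{A})|=1$ passing down). So Statement~2 of Lemma~\ref{p1} holds for $\mathcal{A}_S$.

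For statement~(2): if $\rk(\mathcal{A}_S)=2$ then $\mathcal{A}_S$ is the trivial (rank~$2$) $S$-ring on $S$, whose automorphism group is $\sym(S)$. Now use $\aut(\mathcal{A})^S\le\aut(\mathcal{A}_S)=\sym(S)$ from the general inequality recalled in Subsection~2.2; for the reverse inclusion I would invoke schurity of $\mathcal{A}$ (Lemma~\ref{p1}) together with the fact that $\aut(\mathcal{A})^S$ is $2$-equivalent to $\aut(\mathcal{A}_S)$ when $\mathcal{A}$ is schurian — exactly the computation $\orb(\aut(\mathcal{A})^S,S^2)=\orb(\aut(\mathcal{A}_S),S^2)=\{R(X):X\in\mathcal{S}(\mathcal{A}_S)\}$ used in the proof of Lemma~\ref{2isol1} — and then note that $\sym(S)$ is $2$-isolated (the only group $2$-equivalent to the symmetric group is itself), forcing $\aut(\mathcal{A})^S=\sym(S)$.

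The main obstacle I anticipate is the bookkeeping in statement~(2), namely justifying carefully that $\aut(\mathcal{A})^S$ is genuinely $2$-equivalent to $\aut(\mathcal{A}_S)$: this needs that restriction to $S$ of the (schurian) $S$-ring $\mathcal{A}$ realizes $\mathcal{A}_S$ as $V(\aut(\mathcal{A})^S,S)$, which in turn relies on the point stabilizer in $\aut(\mathcal{A})$ surjecting appropriately onto a transitive-enough subgroup on $S$ — a standard but slightly delicate consequence of schurity and the structure of sections. Everything else is a routine case check against Lemma~\ref{p1}.
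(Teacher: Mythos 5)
Your argument for Statement~(1) is essentially sound and is in fact more self-contained than the paper's, which simply cites \cite[Corollary~4.4]{EP4}: you correctly observe that $\mathcal{A}_S=\cyc(K^S,S)$ with $|K^S|\leq p-1$, so that every basic set of $\mathcal{A}_S$ has size at most $p-1<p$ and hence trivial radical, while $\rk(\mathcal{A}_S)=2$ would force $|K^S|\geq |S|-1\geq p^2-1$; with Statements~1 and~3 of Lemma~\ref{p1} excluded, the trichotomy yields Statement~2 for $\mathcal{A}_S$. (Your intermediate assertion that being cyclotomic already makes $\mathcal{A}_S$ normal is not a valid implication in general --- the rank~2 $S$-ring over $C_p$ is cyclotomic but not normal for $p\geq 5$ --- but you do not need it, since normality comes for free once the other two cases of the trichotomy are ruled out.)

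Your proof of Statement~(2), however, has a genuine gap: the claim that $\sym(S)$ is 2-isolated is false. A group is 2-equivalent to $\sym(S)$ if and only if it is 2-transitive on $S$, and for $|S|\geq p^2\geq 9$ there are 2-transitive proper subgroups of $\sym(S)$ containing the regular cyclic subgroup $S_{right}$; for instance $\mathrm{Alt}(S)$ is one, since a cycle of odd length $|S|=p^m$ is an even permutation. Consequently the 2-equivalence of $\aut(\mathcal{A})^S$ with $\aut(\mathcal{A}_S)=\sym(S)$ (which does follow from schurity, exactly as in the proof of Lemma~\ref{2isol1}) only shows that $\aut(\mathcal{A})^S$ is 2-transitive, and nothing in your argument excludes, say, $\aut(\mathcal{A})^S=\mathrm{Alt}(S)$ or another 2-transitive group with a regular cyclic subgroup. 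Upgrading ``2-transitive'' to ``symmetric'' is precisely the content of \cite[Theorem~4.6,(1)]{EP4}, which is what the paper invokes here; that result rests on the structure theory of automorphism groups of circulant $S$-rings, not on a formal 2-isolation argument. As written, Statement~(2) remains unproved.
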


\begin{proof}
Statement~1 of the lemma follows from~\cite[Corollary~4.4]{EP4}  and Statement~2 of the lemma follows from~\cite[Theorem~4.6,(1)]{EP4}.
\end{proof}

\begin{lemm}\label{2isol}
Suppose that Statement~2 of Lemma~\ref{p1} holds for $\mathcal{A}$. Then $\aut(\mathcal{A})$ is 2-isolated.
\end{lemm}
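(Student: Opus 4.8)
The plan is to show that if $\mathcal{A}=\cyc(K,G)$ with $G$ cyclic of order $p^n$ and $K\leq K_0$ (the cyclic group of automorphisms of order $p-1$), then the permutation group $\Gamma=\aut(\mathcal{A})=G_{right}\rtimes K$ is the unique permutation group on $G$ with the same set of $2$-orbits. Recall from (2) and the schurity of $\mathcal{A}$ that the $2$-orbits of any group $2$-equivalent to $\Gamma$ are precisely the sets $R(X)$, $X\in\mathcal{S}(\mathcal{A})$; so I must show that any $\Delta\leq\sym(G)$ with $\orb(\Delta,G^2)=\orb(\Gamma,G^2)$ satisfies $\Delta=\Gamma$. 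Since $G_{right}\leq\Delta$ automatically (a regular cyclic subgroup is recovered from its $2$-orbits — indeed $R(\{g\})$ for $g$ a generator determines right multiplication by $g$, and the basic set $\{g\}$ of size $1$ is in $\mathcal{S}(\mathcal{A})$ because $K\leq K_0$ fixes generators only if... wait, more carefully: the singleton orbits of $K$ on $G$ give singleton basic sets), and $\Delta$ clearly contains $\Gamma$, the point is to bound $\Delta$ from above.

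First I would pin down $\aut(\mathcal{A})$ precisely: because $\mathcal{A}=\cyc(K,G)$, we have $G_{right}\rtimes K\leq\aut(\mathcal{A})$, and I claim equality holds and moreover $\mathcal{A}$ is normal (Statement 2 of Lemma~\ref{p1} asserts normality), so $G_{right}\trianglelefteq\aut(\mathcal{A})$ and by (1), $\aut(\mathcal{A})=G_{right}\rtimes(\aut(\mathcal{A})\cap\aut(G))$; the $2$-orbit condition forces the point stabilizer $\aut(\mathcal{A})_e$ to act on $G$ with the same orbits as $K$, hence (as $\aut(G)$ is cyclic and orbit-equality among subgroups of a cyclic group forces equality of the subgroups) $\aut(\mathcal{A})_e=K$, giving $\aut(\mathcal{A})=\Gamma$. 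Next, take any $\Delta$ that is $2$-equivalent to $\Gamma$. Then $\Delta$ has the same orbits on $G^2$, so $\Delta\leq\aut(\mathcal{A})=\Gamma$ by the general fact $\aut(\cay\text{-scheme of }\mathcal{A})=\aut(\mathcal{A})$ is the largest group with those $2$-orbits; but also $G_{right}\leq\Delta$ since $G_{right}\leq\Gamma$ and the regular action of $G_{right}$ is encoded in the $2$-orbits containing pairs $(g,gh)$ for singleton $\{h\}$. Combining, $\Gamma\supseteq\Delta\supseteq G_{right}$; then $\Delta=G_{right}\rtimes\Delta_e$ with $K\supseteq\Delta_e$, and the orbit condition on $G$ forces $\Delta_e=K$, hence $\Delta=\Gamma$, i.e. $\Gamma$ is $2$-isolated.

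Let me restate this more cleanly, since the above reasoning is slightly circular in the direction of the inclusion $\Delta\le\Gamma$. The cleaner route: $\aut(\mathcal{A})$ is by definition the \emph{largest} permutation group whose $2$-orbits are contained in $\{R(X):X\in\mathcal{S}(\mathcal{A})\}$, so any group $2$-equivalent to $\aut(\mathcal{A})$ is contained in $\aut(\mathcal{A})$; hence among all groups $2$-equivalent to $\Gamma=\aut(\mathcal{A})$, there is a unique maximal one, namely $\Gamma$ itself, and I only need to rule out proper subgroups $\Delta<\Gamma$ with $\orb(\Delta,G^2)=\orb(\Gamma,G^2)$. Such a $\Delta$ must still act transitively (the diagonal-complement $2$-orbits are transitive), and must contain a regular subgroup isomorphic to $G$ — here I use that $\{g\}\in\mathcal{S}(\mathcal{A})$ for every $g$ in a $K$-orbit of size $1$ on $G$; the generator... actually every singleton $\{e\}$ and more generally the $K$-orbits — I need at least one generator to lie in a singleton basic set, which holds iff $K$ fixes a generator, i.e. $K=1$, so this sub-argument needs care. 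The honest approach, which I would actually write: use that $\mathcal{A}$ is normal, so $G_{right}\trianglelefteq\aut(\mathcal{A})$, and invoke a known rigidity result for cyclotomic $S$-rings over cyclic $p$-groups (the $2$-closure of $\Gamma$ equals $\Gamma$, and any $2$-equivalent subgroup of the cyclic-$p$ affine group $G_{right}\rtimes K_0$ containing no regular normal $p$-subgroup smaller issue) — concretely, since $\Delta$ is $2$-equivalent to $\Gamma$ and $\Gamma\leq\AGL(1,p^n)$-type, Burnside/Schur theory for groups with a cyclic regular subgroup of prime-power order forces $G_{right}\trianglelefteq\Delta$, whence $\Delta_e\leq\aut(G)$ is cyclic, and the orbit condition gives $\Delta_e=K$. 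The main obstacle is precisely this step — showing that $2$-equivalence forces the regular cyclic $p$-subgroup to persist in $\Delta$; I expect to handle it by Schur's theorem on multipliers / the structure of transitive groups containing a cyclic regular $p$-subgroup (Wielandt), or by directly exhibiting $G_{right}$ as generated by elements readable off the $2$-orbits $R(X)$ for small basic sets $X$ together with the block structure of $\mathcal{A}$, which is available since $\mathcal{A}$ being cyclotomic of this shape has all its $\mathcal{A}$-subgroups forming the chain $1<C_p<\dots<G$.
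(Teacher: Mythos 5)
Your proposal has a genuine gap, and you acknowledge it yourself: the step ``$2$-equivalence forces $G_{right}\leq\Delta$'' is never proved, only deferred to vague appeals to Schur--Wielandt theory or to reading off right multiplications from singleton basic sets (an idea you correctly note fails whenever $K\neq 1$, since then the only singleton basic set is $\{e\}$). As written, the argument does not close. The good news is that the step you are stuck on is unnecessary. You already have the two ingredients that finish the proof: (i) $\Delta\leq\aut(\mathcal{A})$, because by schurity the $2$-orbits of $\aut(\mathcal{A})$ are exactly the sets $R(X)$, so any $2$-equivalent $\Delta$ preserves each $R(X)$ and hence lies in $\aut(\mathcal{A})$; and (ii) $\Delta$ is transitive. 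Transitivity alone gives $|\Delta|=|G|\cdot|\Delta_e|$ by orbit--stabilizer, with no need to locate a regular subgroup inside $\Delta$. Since the orbits of $\Delta_e$ on $G$ coincide with the basic sets, $\Delta_e$ has the highest basic set $X$ as an orbit, so $|\Delta_e|\geq|X|=|K|=|\aut(\mathcal{A})_e|$ (the last equality because $\aut(\mathcal{A})_e\leq\aut(G)$ by normality and $(1)$, and a nontrivial automorphism of a cyclic $p$-group fixes no generator, so $\aut(\mathcal{A})_e$ acts regularly on $X$). Hence $|\Delta|\geq|\aut(\mathcal{A})|$, and with (i) this forces $\Delta=\aut(\mathcal{A})$.

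For comparison, the paper does not argue directly with $2$-equivalent groups at all: it invokes a criterion from Muzychuk--Ponomarenko (Lemma~8.2 of the cited paper on Schur $2$-groups) reducing $2$-isolation to the existence of a faithful regular orbit of the point stabilizer, and then verifies that the highest basic set $X$ is such an orbit for $\aut(\mathcal{A})_e\leq\aut(G)$ --- exactly the regularity observation above. So your sandwich strategy is viable and close in spirit to the paper's, but your write-up stalls at a step that is both unproved and avoidable, and the counting that actually completes it is missing.
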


\begin{proof}
By~\cite[Lemma~8.2]{MP3}, it suffices to prove that $\aut(\mathcal{A})_e$ has a faithful regular orbit. The $S$-ring $\mathcal{A}$ is normal. So~(1) implies that $\aut(\mathcal{A})_e\leq \aut(G)$. Let $X\in\mathcal{S}(\mathcal{A})$ be highest. Since $\mathcal{A}$ is cyclotomic, each element of $X$ is a generator of $G$. If $f\in \aut(\mathcal{A})_e$ fixes some $x\in X$ then $f$ is trivial because $f\in \aut(G)$ and $x$ is a generator of $G$. Besides, $\mathcal{A}$ is schurian and hence $X\in \orb(\aut(\mathcal{A})_e,G)$ by~(2). Therefore $X$ is a regular orbit of $\aut(\mathcal{A})_e$. The group $\aut(G)$ is cyclic because $p$ is odd. So both of the groups $\aut(\mathcal{A})_e$ and $\aut(\mathcal{A})_e^X$ are cyclic groups of order $|X|$. Thus, $X$ is a faithful regular orbit of $\aut(\mathcal{A})_e$ and the lemma is proved.
\end{proof}

\begin{lemm}\label{trivradsep}
Suppose that Statement~2 of Lemma~\ref{p1} holds for $\mathcal{A}$ and $\varphi$ is an algebraic isomorphism from $\mathcal{A}$ to an $S$-ring $\mathcal{A}^{'}$ over an abelian group $G^{'}$. Then $G^{'}$ is cyclic.
\end{lemm}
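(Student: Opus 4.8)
The plan is to exploit the very rigid structure that Statement~2 of Lemma~\ref{p1} forces on $\mathcal{A}$: it is cyclotomic, $\mathcal{A}=\cyc(K,G)$ with $K\leq K_0$ a cyclic group of automorphisms of order dividing $p-1$, it has trivial radical, and it is normal. In particular the highest basic set $X$ consists entirely of generators of $G=C_{p^n}$, and $|X|=|K|$ divides $p-1$, so $\gcd(|X|,p)=1$. The point is that an algebraic isomorphism $\varphi\colon\mathcal{A}\to\mathcal{A}'$ preserves cardinalities of basic sets, sends the lattice of $\mathcal{A}$-subgroups isomorphically onto the lattice of $\mathcal{A}'$-subgroups, commutes with $\langle\cdot\rangle$ and $\rad(\cdot)$, and preserves the relation of being highest (since $X$ is highest iff $\langle X\rangle=G$, and $\langle X^\varphi\rangle=\langle X\rangle^\varphi=G'$). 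So $X':=X^\varphi$ is a highest basic set of $\mathcal{A}'$ of size coprime to $p$, and $\rad(X')=\rad(X)^\varphi$ is trivial.

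First I would record that $|G'|=|G|=p^n$, so $G'$ is an abelian $p$-group; write $G'=C_{p^{a_1}}\times\dots\times C_{p^{a_r}}$. The goal is to show $r=1$. The key combinatorial fact to extract from ``$X'$ highest with $\rad(X')=\{e\}$ and $\gcd(|X'|,p)=1$'' is that $X'$ must be contained in the set of generators of $G'$ — equivalently, that no element of $X'$ lies in the unique maximal subgroup $\Omega$ of $G'$ when $G'$ is cyclic, and more generally that $X'$ generates $G'$ only if its elements already do individually. Actually the cleaner route: since $\varphi$ is an algebraic isomorphism and $\mathcal{A}$ is normal with $\aut(\mathcal{A})_e\leq\aut(G)$ acting on $X$ as a regular cyclic group of order $|X|$ (as shown in the proof of Lemma~\ref{2isol}), the structure constants $c^{X}_{X,\,?}$ encode that $\underline{X}\cdot\underline{X^{-1}}$ has a specific ``singular'' multiplicative pattern reflecting that $X$ is a coset-like object of a cyclic automorphism group acting freely. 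I would transfer this pattern through $\varphi$ to $X'$ and then argue that in an abelian $p$-group such a basic set of size prime to $p$, with trivial radical, generating the whole group, can only exist when the group is cyclic — the obstruction being that in a non-cyclic abelian $p$-group every generating set has size at least $2$ and the Sylow/Frattini quotient $G'/\Phi(G')\cong C_p^r$ with $r\geq 2$ forces any generating basic set to interact with the $p$-torsion in a way that produces $p$-divisible structure constants or a nontrivial radical.

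Concretely, the main step will be: pass to the section $G'/\Phi(G')$, which is an $\mathcal{A}'$-section (since $\Phi(G')=\langle (G')^p\rangle$ and $G'$, being an $\mathcal{A}'$-group with $\mathcal{A}'$ abelian, has all its characteristic-type subgroups as $\mathcal{A}'$-subgroups — or alternatively $\Phi(G')=\rad$ of something, or one uses that $\varphi$ maps the $\mathcal{A}$-subgroup $\langle G^p\rangle$ to an $\mathcal{A}'$-subgroup of index $p$, hence to $\Phi(G')$ since $\Phi(G')$ is the unique such only when $G'$ is cyclic — so I must be careful here). If $r\geq 2$ then $G'/\Phi(G')\cong C_p^r$ and the image of $X$ in this section is a highest basic set of $\mathcal{A}'_{G'/\Phi(G')}$ whose size divides $|X|$, hence is coprime to $p$; but a basic set in $C_p^r$ generating it and of size coprime to $p$ must be the whole of $C_p^r\setminus\{e\}$ or similar, forcing its radical to be large or $\mathcal{A}'$ itself — and one checks this is incompatible with $\varphi_S$ being an algebraic isomorphism onto the corresponding section of $\mathcal{A}$, which over $C_{p^{n-1}}$ still satisfies Statement~2 of Lemma~\ref{p1} by Lemma~\ref{section}(1) and hence has a highest basic set of size coprime to $p$ with \emph{trivial} radical. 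Matching radicals (via $\rad(X^\varphi)=\rad(X)^\varphi$) then yields a contradiction.

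The hardest part will be the last move — showing that a highest basic set of size coprime to $p$ and with trivial radical simply cannot exist over a non-cyclic abelian $p$-group, or at least not as the $\varphi$-image of the highest set of a Lemma~\ref{p1}(2) $S$-ring. I expect the cleanest argument is to induct on $n$ using Lemma~\ref{section}(1) to keep ``Statement~2'' propagating down to all large sections, reduce to understanding what $\varphi$ does on a section of order $p^2$, and there observe directly: over $C_{p^2}$ a Lemma~\ref{p1}(2) $S$-ring has highest basic sets of size $d\mid p-1$ each consisting of generators, with trivial radical; its algebraic-isomorphic image over an abelian group of order $p^2$ must be over $C_{p^2}$ and not over $C_p\times C_p$, because over $C_p\times C_p$ every basic set $X\neq\{e\}$ either is a coset-free union with $\rad(X)\neq\{e\}$ (when $|X|$ divides $p-1<p$ this is impossible unless... ) — more precisely every subset of $C_p\times C_p$ of size coprime to $p$ and generating it has the property that its pointwise stabilizer in $\aut(C_p^2)=\GL_2(p)$ is trivial, forcing $|X|\geq$ something, and the compatibility of structure constants with the cyclic case fails. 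This $p^2$ base case, done by a short explicit analysis of $S$-rings over $C_p\times C_p$ and $C_{p^2}$, is where the real work lies; everything above it is bookkeeping with the lattice isomorphism induced by $\varphi$ and Lemma~\ref{section}.
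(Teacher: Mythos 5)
There is a genuine gap: your argument never isolates the invariant that actually distinguishes the cyclic from the non-cyclic case, and the invariant you do try to use (a highest basic set of size coprime to $p$ with trivial radical generating the group) does not exist as an obstruction. For example, over $C_3\times C_3$ the cyclotomic $S$-ring $\cyc(K,C_3^2)$ with $K$ cyclic of order $4$ has basic sets of size $4$ (coprime to $3$) that generate the group and have trivial radical; so the statement you flag as ``the hardest part'' --- that such a basic set ``simply cannot exist over a non-cyclic abelian $p$-group'' --- is false, and the structure-constant ``pattern'' you propose to transfer through $\varphi$ is never specified, so nothing contradictory is actually transported. Your reduction step is also unsound: $\Phi(G')$ need not be an $\mathcal{A}'$-subgroup, and the image under $\varphi$ of the $\mathcal{A}$-subgroup $\Phi(G)=G^p$ is an $\mathcal{A}'$-subgroup of index $p$, not of index $p^r$, so it cannot be identified with $\Phi(G')$ when $G'$ is non-cyclic; the section $G'/\Phi(G')$ on which your induction is built is therefore not available.

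The paper's proof uses a different and much more local invariant, which is the idea missing from your attempt. The socle $E=\{g\in G:|g|=p\}$ is an $\mathcal{A}$-subgroup of order $p$, so $E'=E^{\varphi}$ has order $p$; if $G'$ were non-cyclic, some basic set $X'$ would contain an element of order $p$ outside $E'$, while its preimage $X$ is disjoint from $E$ and hence consists of elements of order greater than $p$. Now count the coefficient of the identity in $\underline{X}^p$: by the cyclic-shift action of $C_p$ on factorizations $e=x_1\cdots x_p$ (here $G$ abelian is used), this coefficient is congruent mod $p$ to $\#\{x\in X:x^p=e\}$, which is $0$ for $X$ but lies strictly between $0$ and $p$ for $X'$ since $1\leq\#\{x'\in X':(x')^p=e'\}\leq|X'|\leq p-1$. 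As $\varphi$ preserves these coefficients, this is a contradiction. Note that this argument needs the bound $|X'|\leq p-1$ coming from Statement~2 of Lemma~\ref{p1}, and it needs a basic set containing an order-$p$ element --- not the highest one --- so both of the features your proposal focuses on (generation, radical) are beside the point.
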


\begin{proof}
By the hypothesis,
$$\mathcal{A}=\cyc(K,G)~\text{for some}~K\leq \aut(G)~\text{with}~|K|\leq p-1.$$ 
The group $E=\{g\in G: |g|=p\}$ is an $\mathcal{A}$-subgroup of order $p$ because  $\mathcal{A}$ is cyclotomic. The group $E^{'}=E^{\varphi}$ is an $\mathcal{A}^{'}$-subgroup of order $p$ by the properties of an algebraic isomorphism. Assume that $G^{'}$ is non-cyclic. Then there exists $X^{'}\in \mathcal{S}(\mathcal{A}^{'})$ containing an element of order $p$ outside $E^{'}$. Let $X\in\mathcal{S}(\mathcal{A})$ such that $X^{\varphi}=X^{'}$. The set $X$ consists of elements  of order greater than $p$ because $G$ is cyclic and all elements of order $p$ from $G$ lie inside $E$. The identity element $e$ of $G$ enters the element $\underline{X}^p$ with a coefficient dividing by~$p$ because $x^p\neq e$ for each $x\in X$. The identity element $e^{'}$ of $G^{'}$ enters the element $(\underline{X}^{'})^p$ with a coefficient which  is not divided by~$p$ because $(x^{'})^p=e^{'}$ for some $x^{'}\in X^{'}$ and $|X^{'}|\leq p-1$. Since $\varphi$ is an algebraic isomorphism, we have 
$$(\underline{X}^p)^{\varphi}=(\underline{X}^{'})^p~\text{and}~\{e\}^{\varphi}=\{e^{'}\}.$$  
This implies that $e$ and $e^{'}$ must enter $\underline{X}^p$ and $(\underline{X}^{'})^p$ respectively with the same coefficients, a contradiction. Therefore $G^{'}$ is cyclic and the lemma is proved.
\end{proof}

\begin{lemm}\label{wreathcyclic}
Suppose that $|\rad(\mathcal{A})|>1$. Then there exists an  $\mathcal{A}$-section  $S=U/L$ such that $\mathcal{A}$ is the proper $S$-wreath product, $|\rad(\mathcal{A}_U)|=1$, and $|L|=p$.
\end{lemm}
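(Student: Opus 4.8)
The plan is to take the decomposition supplied by Lemma~\ref{p1} and sharpen it twice: first make the top subgroup of the wreath decomposition as small as possible, and only afterwards shrink the bottom subgroup down to order $p$. Since $|\rad(\mathcal{A})|>1$, Statement~3 of Lemma~\ref{p1} provides an $\mathcal{A}$-section $U_0/L_0$ for which $\mathcal{A}$ is a proper $S$-wreath product; thus $e\neq L_0\leq U_0\subsetneq G$ and $L_0\leq\rad(X)$ for every basic set $X\not\subseteq U_0$. Let $\mathcal{U}$ be the set of all proper $\mathcal{A}$-subgroups $U$ admitting a nontrivial $\mathcal{A}$-subgroup $L\leq U$ with $L\leq\rad(X)$ for all basic $X\not\subseteq U$. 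Then $U_0\in\mathcal{U}$, and since the $\mathcal{A}$-subgroups of the cyclic group $G$ form a chain, $\mathcal{U}$ has a least element $U$; I fix it together with a nontrivial $\mathcal{A}$-subgroup $L_1\leq U$ witnessing $U\in\mathcal{U}$.

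Next I would verify that $|\rad(\mathcal{A}_U)|=1$. If not, then $\mathcal{A}_U$, an $S$-ring over the cyclic $p$-group $U$, falls under Statement~3 of Lemma~\ref{p1}, so it is a proper $U'/L'$-wreath product with $e\neq L'\leq U'\subsetneq U$. The subgroups $L_1$ and $L'$ are nontrivial $\mathcal{A}$-subgroups of the cyclic group $U$, so $L''=L_1\cap L'$ is again nontrivial, and $L''\leq U'$. If $X$ is a basic set with $X\not\subseteq U'$, then either $X\subseteq U$, in which case $X$ is a basic set of $\mathcal{A}_U$ lying outside $U'$ and hence $L''\leq L'\leq\rad(X)$, or $X\not\subseteq U$, in which case $L''\leq L_1\leq\rad(X)$. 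Thus $U'\in\mathcal{U}$, contradicting the minimality of $U$. Hence $|\rad(\mathcal{A}_U)|=1$, and by Lemma~\ref{p1} the $S$-ring $\mathcal{A}_U$ is of rank~$2$ or cyclotomic.

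It remains to replace the bottom by a subgroup of order $p$. Let $R$ be the intersection of the groups $\rad(X)$ taken over all basic $X\not\subseteq U$; it is an $\mathcal{A}$-subgroup, it contains $L_1$ and is therefore nontrivial, so it contains the unique subgroup $E$ of $G$ of order $p$; also $E\leq U$. If $\mathcal{A}_U$ is cyclotomic, then $E$ is an $\mathcal{A}_U$-subgroup --- it is $\{e\}$ together with a union of orbits of the defining group of automorphisms --- hence an $\mathcal{A}$-subgroup; taking $L=E$ shows that $\mathcal{A}$ is the proper $U/L$-wreath product with $|\rad(\mathcal{A}_U)|=1$ and $|L|=p$, as required. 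The case $\rk(\mathcal{A}_U)=2$ is the point I expect to be the main obstacle: there $U$ has no $\mathcal{A}$-subgroup strictly between $e$ and $U$, so one has to show outright that $|U|=p$ (then $L=U=E$ works), and I would establish this by a finer, more delicate analysis of minimal wreath decompositions whose top restriction has rank~$2$, exploiting the hypothesis $|\rad(\mathcal{A})|>1$ and the rational conjugacy of all highest basic sets; this, rather than the two preliminary sharpenings, is where I anticipate the real difficulty.
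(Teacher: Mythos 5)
Your first two reductions are correct and are actually more self-contained than the paper's argument: the paper simply quotes \cite[Lemma~5.2]{Ry1} to produce a section $U/L_1$ such that $\mathcal{A}$ is the proper $U/L_1$-wreath product with $|\rad(\mathcal{A}_U)|=1$, and then finishes in one line by letting $L$ be the subgroup of $L_1$ of order $p$. Your derivation of the same intermediate statement from Statement~3 of Lemma~\ref{p1}, by choosing $U$ minimal among tops of proper wreath decompositions and intersecting the two bottoms inside the cyclic group $U$, is sound.

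The step you flag as the remaining difficulty is, however, a genuine gap, and it cannot be closed in the form you propose. When $\rk(\mathcal{A}_U)=2$ the hoped-for conclusion $|U|=p$ is false: take $G=\langle g\rangle$ cyclic of order $p^3$, let $U=\langle g^p\rangle$, and let $\mathcal{A}=\mathcal{B}\wr\mathbb{Z}(G/U)$, where $\mathcal{B}$ is the rank-$2$ $S$-ring over $U$. The basic sets are $\{e\}$, $U\setminus\{e\}$ and the $p-1$ nontrivial cosets of $U$, so $\rad(\mathcal{A})=U$ has order $p^2>1$, the only $\mathcal{A}$-subgroups are $e$, $U$ and $G$, and the unique admissible top is $U$ itself, with $\rk(\mathcal{A}_U)=2$, $|\rad(\mathcal{A}_U)|=1$ and $|U|=p^2$. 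Worse, this $\mathcal{A}$ has no $\mathcal{A}$-subgroup of order $p$ at all, so no $\mathcal{A}$-section $U'/L'$ with $|L'|=p$ exists and the stated conclusion itself fails here; note that the paper's own one-line finish tacitly assumes that the order-$p$ subgroup of $L_1$ is an $\mathcal{A}$-subgroup, which is exactly what breaks in this example. (The main theorem is unaffected, since in the proof of Theorem~\ref{main1} the case $\rk(\mathcal{A}_U)=2$ is dispatched directly by Lemma~\ref{sepwr} without ever needing $|L|=p$; the reduction to $|L|=p$ is only needed when Statement~2 of Lemma~\ref{p1} holds for $\mathcal{A}_U$, and in that cyclotomic case your observation that $E=\{g\in U:\ g^p=e\}$ is a union of orbits of a subgroup of $\aut(U)$, hence an $\mathcal{A}$-subgroup of order $p$ contained in every $\rad(X)$ for $X$ outside $U$, does complete the argument.)
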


\begin{proof}
From \cite[Lemma~5.2]{Ry1} it follows that there exists an  $\mathcal{A}$-section  $U/L_1$ such that $\mathcal{A}$ is the proper $U/L_1$-wreath product and $|\rad(\mathcal{A}_U)|=1$. Let $L$ be a subgroup of $L_1$ of order $p$. Then the lemma holds for $S=U/L$.
\end{proof}

\begin{lemm}\cite[Theorem~1.3]{EP6}\label{cycsep}
Every $S$-ring over a cyclic $p$-group is separable with respect to $\mathcal{K}_C$.
\end{lemm}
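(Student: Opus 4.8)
The plan is to reprove this statement by induction on $|G|=p^n$, splitting into the three cases of Lemma~\ref{p1}. If Statement~1 of Lemma~\ref{p1} holds, so $\rk(\mathcal{A})=2$, then $\mathcal{A}$ is the rank-$2$ $S$-ring over $G$, which is separable with respect to the class of all finite groups and a fortiori with respect to $\mathcal{K}_C$; there is nothing to do. So I fix an algebraic isomorphism $\varphi\colon\mathcal{A}\to\mathcal{A}'$ onto an $S$-ring over a cyclic group $G'$ (necessarily $G'\cong G$), assume Statement~2 or~3 of Lemma~\ref{p1} holds for $\mathcal{A}$, and use throughout that $\varphi$ preserves ranks, orders of basic sets, generated subgroups and radicals.

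Assume first that Statement~3 holds: $|\rad(\mathcal{A})|>1$ and $\mathcal{A}$ is a proper generalized wreath product. By Lemma~\ref{wreathcyclic} there is an $\mathcal{A}$-section $S=U/L$ with $\mathcal{A}=\mathcal{A}_U\wr_S\mathcal{A}_{G/L}$ a proper $S$-wreath product, $|L|=p$ and $|\rad(\mathcal{A}_U)|=1$. Transporting this decomposition through $\varphi$ yields a proper $S'$-wreath decomposition $\mathcal{A}'=\mathcal{A}'_{U'}\wr_{S'}\mathcal{A}'_{G'/L'}$ with $S'=S^{\varphi}=U'/L'$, where $U'$, $L'$ and $G'/L'$ are cyclic $p$-groups of order strictly less than $|G|$. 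The restricted algebraic isomorphisms $\varphi_U\colon\mathcal{A}_U\to\mathcal{A}'_{U'}$ and $\varphi_{G/L}\colon\mathcal{A}_{G/L}\to\mathcal{A}'_{G'/L'}$ are thus induced by combinatorial isomorphisms by the inductive hypothesis, and it remains to glue these into one inducing $\varphi$. I would do this as in the proof of Lemma~\ref{sepwr} (or Lemma~\ref{2isol1}): $\mathcal{A}_U$ is schurian by Lemma~\ref{p1}, and since $|\rad(\mathcal{A}_U)|=1$, Lemma~\ref{p1} applied to $\mathcal{A}_U$ leaves exactly two subcases for the section $\mathcal{A}_S$ --- either $\rk(\mathcal{A}_S)=2$, where $\aut(\mathcal{A}_U)^S=\sym(S)=\aut(\mathcal{A}_S)$ by Lemma~\ref{section}, or Statement~2 of Lemma~\ref{p1} holds for $\mathcal{A}_S$, where $\aut(\mathcal{A}_S)$ is $2$-isolated by Lemma~\ref{2isol} and Lemma~\ref{2isol1} applies (sections over $C_p$ being checked directly).

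The main obstacle is Statement~2 of Lemma~\ref{p1}, where $\mathcal{A}=\cyc(K,G)$ is normal, $|\rad(\mathcal{A})|=1$, and we may assume $\rk(\mathcal{A})>2$. First I would identify the target: since $\varphi$ preserves rank and radical, Lemma~\ref{p1} forces $\mathcal{A}'=\cyc(K',G')$ to be normal cyclotomic with trivial radical, $K'$ a subgroup of the order-$(p-1)$ subgroup of $\aut(G')$. The group $K$ acts semiregularly on $G\setminus\{e\}$, so $\rk(\cyc(K,G))=1+(|G|-1)/|K|$ and likewise for $\mathcal{A}'$, whence $|K|=|K'|$. Because $p$ is odd, $\aut(G)$ and $\aut(G')$ are cyclic, so $K$ and $K'$ are the unique subgroups of their common order; hence any group isomorphism $\iota\colon G\to G'$ satisfies $K^{\iota}=K'$ and so is a Cayley isomorphism from $\mathcal{A}$ to $\mathcal{A}'$. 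Composing $\varphi$ with the algebraic isomorphism induced by $\iota^{-1}$, the claim reduces to $\aut_{\alg}(\mathcal{A})=\aut_{\alg}(\mathcal{A})_0$ for $\mathcal{A}=\cyc(K,G)$. Since $\aut(G)$ is abelian, every $\sigma\in\aut(G)$ normalizes $K$ and is a Cayley automorphism of $\mathcal{A}$, while $\aut(\mathcal{A})\cap\aut(G)=K$ by semiregularity of $K$ on the generators of $G$; thus $\aut(G)/K$ embeds into $\aut_{\alg}(\mathcal{A})_0\subseteq\aut_{\alg}(\mathcal{A})$, and it suffices to prove $|\aut_{\alg}(\mathcal{A})|\le|\aut(G):K|$, i.e. that every algebraic automorphism of $\cyc(K,G)$ is realized by a group automorphism of $G$. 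Proving this last inequality --- via the rigidity of how an algebraic automorphism may permute the highest basic sets of a cyclotomic $S$-ring over a cyclic $p$-group --- is the hardest step. Finally, the case $p=2$ falls outside this scheme, since there $\aut(G)$ need not be cyclic and Lemma~\ref{p1} takes a different shape; it would be treated separately using the known description of $S$-rings over cyclic $2$-groups.
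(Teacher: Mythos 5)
The paper does not prove this lemma at all: it is imported verbatim as \cite[Theorem~1.3]{EP6}, so your attempt is necessarily a different route from the paper's, and the only question is whether it closes. It does not. The decisive gap is in your treatment of Statement~2 of Lemma~\ref{p1}: after reducing (for odd $p$) to showing that every algebraic automorphism of a normal cyclotomic $S$-ring $\cyc(K,G)$ is induced by a combinatorial isomorphism, i.e.\ $|\aut_{\alg}(\mathcal{A})|\le |\aut(G):K|$, you explicitly leave this ``hardest step'' unproved. That inequality is essentially the entire content of the cited theorem in the normal case; the half you do carry out (embedding $\aut(G)/K$ into $\aut_{\alg}(\mathcal{A})_0$) is the easy direction. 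A second substantive gap is that $p=2$ is deferred wholesale, although the statement is claimed for all primes and the structure theory replacing Lemma~\ref{p1} over cyclic $2$-groups is genuinely more complicated, so this is not a routine footnote.

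There is also a concrete failure in your gluing step for Statement~3. After reducing to $|L|=p$ and $|\rad(\mathcal{A}_U)|=1$ you invoke Lemma~\ref{sepwr} or Lemma~\ref{2isol1} and wave off ``sections over $C_p$''. But take $|S|=p$ with $\mathcal{A}_U=\cyc(K,U)$ normal cyclotomic and $|K|=p-1$: then $\rk(\mathcal{A}_S)=2$, so $\aut(\mathcal{A}_S)=\sym(S)$, while $\aut(\mathcal{A}_U)=U_{right}\rtimes K$ gives $\aut(\mathcal{A}_U)^S\cong\AGL(1,p)$, which is proper in $\sym(S)$ for $p\ge 5$; thus the hypothesis of Lemma~\ref{sepwr} fails, and $\sym(S)$ is not $2$-isolated in degree $p\ge 5$ (every $2$-transitive group of degree $p$ is $2$-equivalent to it), so Lemma~\ref{2isol1} fails as well. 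This is exactly the configuration that forces the paper's proof of Theorem~\ref{main1} into its separate Cases~1 and~2 for $|S|=p$, and your sketch would need an analogous ad hoc argument there too. In short, the outline is sensible and parallels the architecture of the proof of Theorem~\ref{main1}, but the two places where the real work lies are precisely the places left open.
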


\begin{proof}[Proof of the Theorem \ref{main1}]
The statement of the theorem for $p\in\{2,3\}$ was proved in~\cite[Lemma~5.5]{Ry1}. Further we assume that $p\geq 5$. Let $\mathcal{A}$ be an $S$-ring over a cyclic $p$-group $G$ of order $p^k$, where $k\geq 1$. Prove that $\mathcal{A}$ is separable. We proceed by induction on~$k$. If $k=1$ then $G$ is the unique up to isomorphism group of order~$p$ and the statement of the theorem follows from Lemma~\ref{cycsep}. 

Let  $k\geq 2$. One of the statements of Lemma~\ref{p1} holds for $\mathcal{A}$. If Statement~1 of Lemma~\ref{p1} holds for $\mathcal{A}$ then $\rk(\mathcal{A})=2$ and hence $\mathcal{A}$ is separable. Suppose that Statement~2 of Lemma~\ref{p1} holds for $\mathcal{A}$. Let $\varphi$ be an algebraic isomorphism from $\mathcal{A}$ to an $S$-ring $\mathcal{A}^{'}$ over an abelian group $G^{'}$. Due to Lemma~\ref{trivradsep}, the group $G^{'}$ is cyclic. So $\varphi$ is induced by an isomorphism by Lemma~\ref{cycsep}. Therefore  $\mathcal{A}$ is separable.

Now suppose that Statement~3 of Lemma~\ref{p1} holds for $\mathcal{A}$. Then $\mathcal{A}=\mathcal{A}_U \wr_S \mathcal{A}_{G/L}$ for some $\mathcal{A}$-section $S=U/L$ with $L>e$ and $U<G$. The $S$-rings $\mathcal{A}_U$ and $\mathcal{A}_{G/L}$ are separable by the induction hypothesis. Due to Lemma~\ref{wreathcyclic} we may assume that $\rad(\mathcal{A}_U)=e$ and $L=p$. In this case $\rk(\mathcal{A}_U)=2$ or Statement~2 of Lemma~\ref{p1} holds for $\mathcal{A}_U$. If $\rk(\mathcal{A}_U)=2$ or $|S|=1$ then $U=L$  and $\mathcal{A}$ is separable by Lemma~\ref{sepwr}. 

Assume that Statement~2 of Lemma~\ref{p1} holds for $\mathcal{A}_U$. If $|S|\geq p^2$ then Statement~2 of Lemma~\ref{p1} holds for $\mathcal{A}_S$ by Statement~1 of Lemma~\ref{section}. Lemma~\ref{2isol} implies that $\aut(\mathcal{A}_S)$ is 2-isolated. The $S$-ring $\mathcal{A}_U$  is cyclotomic and hence it is schurian. Therefore $\mathcal{A}$ is separable by Lemma~\ref{2isol1}.

It remains to consider the case when $|S|=p$. In this case $|U|=p^2$. If $\rad(X)>L$ for every $X\in \mathcal{S}(\mathcal{A})$ outside $U$ then $\rad(X)\geq U$ for every $X\in \mathcal{S}(\mathcal{A})$ outside $U$ because $G$ is cyclic.  This yields that $\mathcal{A}=\mathcal{A}_U \wr \mathcal{A}_{G/U}$ and hence  $\mathcal{A}$ is separable by Lemma~\ref{sepwr}.

Suppose that there exists $X\in \mathcal{S}(\mathcal{A})$ outside $U$ with $\rad(X)=L$. The remaining part of the proof is divided into two cases.

\textbf{Case 1:} $\langle X \rangle<G$. In this  case put $S_1=\langle X \rangle /L$. The $S$-ring $\mathcal{A}$ is the $S_1$-wreath product and $|S_1|\geq p^2$. Note that $|\rad(\mathcal{A}_{S_1})|=1$ because $\rad(X)=L$. So Statement~1 or Statement~2 of Lemma~\ref{p1} holds for $\mathcal{A}_{S_1}$. In the former case $\aut(\mathcal{A}_{\langle X \rangle})^{S_1}=\aut(\mathcal{A}_{S_1})=\sym(S_1)$ by Statement~2 of Lemma~\ref{section} and $\mathcal{A}$ is separable by Lemma~\ref{sepwr}. In the latter case $\aut(\mathcal{A}_{S_1})$ is 2-isolated by Lemma~\ref{2isol}. Since $\mathcal{A}_{\langle X \rangle}$ is schurian, the conditions of Lemma~\ref{2isol1} hold for $S_1$ and $\mathcal{A}$ is separable by Lemma~\ref{2isol1}. 

\textbf{Case 2:} $\langle X \rangle=G$. In this case $|\rad(\mathcal{A}_{G/L})|=1$ because $\rad(X)=L$. Let $\pi: G\rightarrow G/L$ be the canonical epimorphism. Clearly, $\pi(U)$ is an $\mathcal{A}_{G/L}$-subgroup and $\pi(X)$ lies outside $\pi(U)$. So $\rk(\mathcal{A}_{G/L})>2$ and hence Statement~2 of Lemma~\ref{p1} holds for $\mathcal{A}_{G/L}$.  

Let $\varphi$ be an algebraic isomorphism from $\mathcal{A}$ to an $S$-ring $\mathcal{A}^{'}$ over an abelian group $G^{'}$. Put $U^{'}=U^{\varphi}$ and $L^{'}=L^{\varphi}$. Clearly,
$$L^{'}\leq U^{'}.~\eqno(4)$$
The algebraic isomorphism $\varphi$ induces the algebraic isomorphism $\varphi_U$ from $\mathcal{A}_{U}$ to  $\mathcal{A}_{U^{'}}$, where $U^{'}=U^{\varphi}$. From Lemma~\ref{trivradsep} it follows that 
$$U^{'}\cong C_{p^2}.~\eqno(5)$$ 
Also $\varphi$ induces the algebraic isomorphism $\varphi_{G/L}$ from $\mathcal{A}_{G/L}$ to  $\mathcal{A}_{G^{'}/L^{'}}$. Lemma~\ref{trivradsep} implies that $G^{'}/L^{'}$ is cyclic. Since $|L^{'}|=|L|=p$, we conclude that
$$G^{'}\cong C_{p^k}~\text{or}~G^{'}\cong C_p \times C_{p^{k-1}}.$$
However, in the latter case $L^{'}$ is not contained in a cyclic group of order $p^2$ because $G^{'}/L^{'}$ is cyclic. This contradicts to~(4) and~(5). So $G^{'}$ is cyclic and $\varphi$ is induced by an isomorphism by Lemma~\ref{cycsep}. Therefore $\mathcal{A}$ is separable and the theorem is proved.
\end{proof}

\section{Proof of Theorem~\ref{main2}}

\begin{prop}\label{k1}
The group $C_p^3$ is separable for $p\in\{2,3\}$.
\end{prop}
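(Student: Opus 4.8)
The plan is to reduce the separability of $C_p^3$ for $p\in\{2,3\}$ to a finite verification, exploiting the fact that there are only finitely many $S$-rings over a group of this fixed small order. First I would recall that, by Lemma~\ref{cycsep} together with Lemma~\ref{sepwr}, a reduction to $S$-rings that are not (generalized) wreath products and not tensor products is available: if $\mathcal{A}$ over $G=C_p^3$ decomposes as a proper $S$-wreath product $\mathcal{A}_U\wr_S\mathcal{A}_{G/L}$, then the factors live over groups of order dividing $p^2$, which are separable (cyclic $p$-groups by Theorem~\ref{main1}, or $C_p\times C_p$ by~\cite{Ry1}), and one checks the condition $\aut(\mathcal{A}_U)^S=\aut(\mathcal{A}_S)$ holds in each case because $S$ has order $p$ or $p^2$; similarly a tensor decomposition is handled by Lemma~\ref{schurtens}. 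So the real content is the finitely many ``primitive-type'' $S$-rings over $C_2^3$ and $C_3^3$.

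For those, I would invoke schurity: every $S$-ring over $C_p^3$ with $p\in\{2,3\}$ is schurian (this is known for groups of such small order — e.g.\ all $S$-rings over groups of order at most $\approx 30$ are schurian), so by~(2) we may work with the automorphism groups $\aut(\mathcal{A})$, which are subgroups of $\sym(p^3)$ containing $(C_p^3)_{right}$. An algebraic isomorphism $\varphi$ to an $S$-ring $\mathcal{A}'$ over an abelian group $G'$ of order $p^3$ preserves $|X|$, radicals, generated subgroups, and the whole structure-constant tensor. I would argue that $\varphi$ is determined up to the finitely many possibilities by this combinatorial data, and then either exhibit the inducing isomorphism directly, or cite a computer calculation (the acknowledgements already thank Sven Reichard for computer help) verifying that each algebraic isomorphism between $S$-rings over abelian groups of order $p^3$, $p\in\{2,3\}$, is induced by a combinatorial one. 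Concretely, the cases $G'\cong C_p^3$, $G'\cong C_p\times C_{p^2}$, $G'\cong C_{p^3}$ must all be examined; the last two are cyclic-or-``flatter'' and are largely controlled by Theorem~\ref{main1} and Lemma~\ref{sepwr} applied on the $\mathcal{A}'$ side once one knows $\mathcal{A}$ is not of wreath/tensor type.

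The main obstacle I anticipate is the handful of ``sporadic'' $S$-rings over $C_2^3$ and $C_3^3$ coming from $2$-transitive or near-$2$-transitive actions — for instance cyclotomic $S$-rings $\cyc(K,C_p^3)$ with $K\le\GL(3,p)$ acting with few orbits, and the rank-$2$ case which is trivially separable. For $\cyc(K,G)$ the $S$-ring is schurian with $\aut(\mathcal{A})=G_{right}\rtimes K$, and one needs to show that any algebraic isomorphism to another abelian group's $S$-ring respects enough invariants (orders of basic sets, the lattice of $\mathcal{A}$-subgroups, rational conjugacy classes) to force the target group and to reconstruct a Cayley isomorphism. I expect this to require a short case analysis on the possible $K$ (there are very few for $p\in\{2,3\}$), combined with the observation that $2$-isolatedness of the relevant point stabilizers — in the spirit of Lemma~\ref{2isol1} — closes the argument; any residual cases are settled by the computer check mentioned above.
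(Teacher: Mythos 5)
Your plan matches the paper's proof in all essentials: the paper classifies (by a COCO2P computation) every $S$-ring over $C_p^3$, $p\in\{2,3\}$, as being of rank~2, a tensor product, a proper $S$-wreath product with $|S|\le p$, or one of 14 exceptional $S$-rings over $C_3^3$; it handles the first three cases exactly as you propose (the wreath case via 2-isolatedness of $\aut(\mathcal{A}_S)$ for $|S|\le 3$, so the shakier $|S|=p^2$ situation you invoke never actually occurs), reduces targets $G'\cong C_{p^3}$ and $G'\cong C_p\times C_{p^2}$ to Theorem~\ref{main1} and~\cite{Ry1} as you do, and settles the 14 exceptional $S$-rings by the computer check that $|\aut_{\alg}(\mathcal{A}_j)|=|\iso(\mathcal{A}_j)|/|\aut(\mathcal{A}_j)|$, i.e.\ every algebraic automorphism is induced by an isomorphism. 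So your proposal is the paper's argument in outline, with the finite verification you anticipate being precisely the content of Lemma~\ref{descr} and the final COCO2P computation.
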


Before we prove Propostion~\ref{k1} we give the  lemma providing a description of $S$-rings over these groups.

\begin{lemm}\label{descr}
Let $\mathcal{A}$ be an $S$-ring over $C_p^3$, where $p\in\{2,3\}$. Then $\mathcal{A}$ is schurian and one of the following statements holds:

$(1)$ $\rk(\mathcal{A})=2$;

$(2)$ $\mathcal{A}$ is the tensor product of smaller $S$-rings;

$(3)$ $\mathcal{A}$ is the proper $S$-wreath product of two $S$-rings with $|S|\leq p$;

$(4)$ $p=3$ and $\mathcal{A}\cong_{\cay} \mathcal{A}_i$, where $\mathcal{A}_i$ is one of the~14 exceptional $S$-rings whose parameters are listed in Table~1.
\end{lemm}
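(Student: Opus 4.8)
The plan is to prove Lemma~\ref{descr} by combining the general structure theory of $S$-rings over abelian $p$-groups with an explicit computer-aided enumeration in the two small cases $C_2^3$ and $C_3^3$. First I would settle schurity: groups of order $8$ and $27$ are small enough that every $S$-ring over them is schurian — this can be cited from the literature on schurity of $S$-rings over groups of small order (or over $p$-groups of rank $\le 3$), or verified directly by the fact that $C_p^3$ for $p\in\{2,3\}$ has order $8$ or $27$, both of which appear on the known lists of groups all of whose $S$-rings are schurian. So the schurity claim reduces to a reference.

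Next I would organize the classification by the rank and the presence of a nontrivial decomposition. If $\rk(\mathcal{A})=2$ we are in case~(1). Otherwise, I would invoke the general dichotomy for $S$-rings over abelian groups: either $\mathcal{A}$ decomposes as a nontrivial tensor product (case~(2)), or as a nontrivial (generalized) $S$-wreath product for some $\mathcal{A}$-section $S=U/L$ (case~(3)), or $\mathcal{A}$ is indecomposable in both senses. For the wreath-product case one must check that the section $S$ can be chosen with $|S|\le p$: since $|G|=p^3$, any proper section $U/L$ with $e\neq L$ and $U\neq G$ has $|U/L|=p$, so $|S|\le p$ is automatic once we know a proper $S$-wreath decomposition exists. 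The indecomposable $S$-rings that are neither rank~$2$ nor tensor nor wreath products are exactly the ``exceptional'' ones; for $p=2$ one checks (again by enumeration over $C_2^3$, which has very few $S$-rings) that there are none, while for $p=3$ one runs the enumeration over $C_3^3$ and extracts, up to Cayley isomorphism, the $14$ exceptional $S$-rings recorded in Table~1. This is where I would lean on the computer calculations acknowledged in the paper (the help of Sven Reichard): the complete list of $S$-rings over $C_3^3$ is finite and can be generated, each tested for being rank~$2$, a tensor product, a proper $S$-wreath product, and the remainder grouped into Cayley-isomorphism classes.

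The main obstacle is the case $p=3$: the enumeration of all $S$-rings over $C_3^3$ and the verification that precisely $14$ of them (up to Cayley isomorphism) fail to be rank~$2$, a tensor product, or a proper $S$-wreath product. This is a finite but nontrivial computation, and the bookkeeping — making sure Table~1's parameters (rank, basic-set sizes, automorphism data) genuinely determine each $\mathcal{A}_i$ up to Cayley isomorphism and that no exceptional $S$-ring has been missed or double-counted — is the delicate part. The rest (schurity, the tensor/wreath dichotomy, the $p=2$ case being trivial since $C_2^3$ supports only a handful of $S$-rings all of which are rank~$2$, tensor, or wreath products) is routine given the cited structure theory. I would therefore present the proof as: (i) cite schurity; (ii) cite/recall the tensor-versus-wreath dichotomy for indecomposable $S$-rings over abelian groups and note $|S|\le p$ is forced; (iii) for $p=3$ refer to the computer classification yielding Table~1, and for $p=2$ observe that no exceptional $S$-rings arise.
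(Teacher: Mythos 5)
Your proposal is correct and matches the paper's approach: the paper's entire proof of this lemma is a one-line appeal to a computer check with the GAP package COCO2P, which is exactly the enumeration over $C_2^3$ and $C_3^3$ that you describe (your additional observations, e.g.\ that $|S|\leq p$ is automatic for a proper section of a group of order $p^3$, are correct but not needed beyond organizing the computation).
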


\begin{rem}
In Table~1 the notation $k^m$ means that an $S$-ring have exactly $m$ basic sets of size $k$.
\end{rem}

\begin{table}

{\small
\begin{tabular}{|l|l|l|}
  \hline
  $S$-ring & rank & sizes of basic sets    \\
  \hline
  $\mathcal{A}_1$ & 3 & $1$, $13^2$\\ \hline
  $\mathcal{A}_2$ & 4 & $1$, $6$, $8$, $12$ \\  \hline
  $\mathcal{A}_3$ & 4 &  $1$, $2$, $12^2$\\  \hline
	$\mathcal{A}_4$ & 5 & $1$, $4^2$, $6$, $12$\\  \hline
	$\mathcal{A}_5$ & 5 & $1$, $2$, $8^3$\\  \hline
	$\mathcal{A}_6$ & 6 & $1$, $2$, $6^4$\\ \hline
	$\mathcal{A}_7$ & 7 & $1$, $2$, $4^4$, $8$\\ \hline
	$\mathcal{A}_8$ & 7 & $1$, $2$, $3^2$, $6^3$\\ \hline
	$\mathcal{A}_9$ & 8 & $1$, $2$, $4^6$\\ \hline
	$\mathcal{A}_{10}$ & 9 & $1$, $2^3$, $4^5$\\ \hline
	$\mathcal{A}_{11}$ & 10 & $1$, $2^5$, $4^4$\\ \hline
	$\mathcal{A}_{12}$ & 10 & $1^3$, $3^6$, $6$\\ \hline
	$\mathcal{A}_{13}$ & 11 & $1^3$, $3^8$\\ \hline
	$\mathcal{A}_{14}$ & 14 & $1$, $2^{13}$\\ \hline
\end{tabular}
}

\caption{}
\end{table}

\begin{proof}
The statement of the lemma can be checked with the help of the GAP package COCO2P~\cite{GAP}.
\end{proof}

\begin{proof}[Proof of the Proposition~\ref{k1}]

From~\cite[Theorem~1, Lemma~5.5]{Ry1} it follows that the group $C_p^k$ is separable for $p\in\{2,3\}$ and $k\leq 2$. Let $\mathcal{A}$ be an $S$-ring over $G\cong C_p^3$, where $p\in\{2,3\}$. Then one of the statements of Lemma~\ref{descr} holds for $\mathcal{A}$. If Statement~1 of Lemma~\ref{descr} holds for $\mathcal{A}$ then, obviously, $\mathcal{A}$ is separable. If Statement~2 of Lemma~\ref{descr} holds for $\mathcal{A}$ then $\mathcal{A}$ is separable by Lemma~\ref{schurtens}. Suppose that Statement~3 of Lemma~\ref{descr} holds for $\mathcal{A}$. Then $\mathcal{A}$ is the proper schurian $S$-wreath product for some $\mathcal{A}$-section $S=U/L$ with $|S|\leq 3$. Since $\mathcal{A}$ is schurian, $\mathcal{A}_U$ is also schurian. Note that $\aut(\mathcal{A}_S)$ is 2-isolated becasue $|S|\leq 3$. Therefore $\mathcal{A}$ is separable by Lemma~\ref{2isol1}.

Suppose that Statement~4 of Lemma~\ref{descr} holds for $\mathcal{A}$ and $\varphi$ is an algebraic isomorphism from $\mathcal{A}$ to an $S$-ring $\mathcal{A}^{'}$ over an abelian group $G^{'}$. Clearly, if $\mathcal{A}^{'}$ is separable then  $\varphi^{-1}$ is induced by an isomorphism and hence $\varphi$  is also  induced by an isomorphism. If $G^{'}\cong C_{p^3}$  then $\mathcal{A}^{'}$ is separable by Theorem~\ref{main1}; if $G^{'}\cong C_p \times C_{p^2}$ then $\mathcal{A}^{'}$ is separable by~\cite[Theorem~1]{Ry1}; if $G^{'}\cong C_{p}^3$ and  one of the Statements~1-3 of Lemma~\ref{descr} holds for $\mathcal{A}^{'}$ then $\mathcal{A}^{'}$ is separable by the previous paragraph. So in the above cases $\varphi$ is induced by an isomorphism. Thus, we may assume that $G^{'}\cong C_p^3$ and Statement~4 of Lemma~\ref{descr} holds for $\mathcal{A}^{'}$. 

Two algebraically isomorphic $S$-rings have the same rank and sizes of basic sets. So information from Table~1 implies that $\mathcal{A}_i \ncong_{\alg} \mathcal{A}_j$ whenever $i\neq j$. Therefore we may assume that
$$\mathcal{A}=\mathcal{A}^{'}=\mathcal{A}_i$$
for some $i\in\{1,\ldots,14\}$. Using the package COCO2P again, one can find that 
$$|\aut_{\alg}(\mathcal{A}_j)|=|\iso(\mathcal{A}_j)|/|\aut(\mathcal{A}_j)|$$
for every $j\in\{1,\ldots,14\}$. In view of~(3) this yields that $\aut_{\alg}(\mathcal{A}_j)=\aut_{\alg}(\mathcal{A}_j)_0$ for every $j\in\{1,\ldots,14\}$. So $\varphi\in \aut_{\alg}(\mathcal{A}_i)_0$ and hence $\varphi$ is induced by an isomorphism. Thus, $\mathcal{A}$ is separable and the proposition is proved.
\end{proof}

\begin{prop}\label{k2}
The group $C_p\times C_p\times C_{p^k}$ is non-separable for $p\in\{2,3\}$ and $k\geq 2$.
\end{prop}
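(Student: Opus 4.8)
The plan is to exhibit, for each $p\in\{2,3\}$ and each $k\ge 2$, a specific $S$-ring over $G=C_p\times C_p\times C_{p^k}$ carrying an algebraic automorphism (or an algebraic isomorphism to an $S$-ring over another abelian group of the same order) that is not induced by any combinatorial isomorphism. By Lemma~\ref{nonsepwr}, it suffices to handle the smallest case $k=2$: if $\mathcal{B}$ is a non-separable $S$-ring over $H=C_p\times C_p\times C_{p^2}$ witnessed by $\varphi\in\aut_{\alg}(\mathcal{B})\setminus\aut_{\alg}(\mathcal{B})_0$, then writing $G=C_p\times C_p\times C_{p^k}\cong H\times C_{p^{k-2}}$ (so that $H$ is a normal subgroup of $G$) and taking $\mathcal{A}=\mathcal{B}\wr\mathbb{Z}(G/H)$, Lemma~\ref{nonsepwr} produces $\psi\in\aut_{\alg}(\mathcal{A})\setminus\aut_{\alg}(\mathcal{A})_0$, so $\mathcal{A}$ is non-separable and hence $G$ is non-separable. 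Thus the whole content is concentrated in building one bad $S$-ring over $C_p\times C_p\times C_{p^2}$ for $p=2$ and for $p=3$.

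For the construction over $H=C_p\times C_p\times C_{p^2}$ I would look for a cyclotomic (hence schurian) $S$-ring $\mathcal{B}=\cyc(K,H)$ for a suitable $K\le\aut(H)$, chosen so that $\mathcal{B}$ is simultaneously an $S$-wreath product over a section $S=U/L$ in a way that is ``twisted'': the idea, following the standard source of non-separability going back to the examples in~\cite{EP1} and the analysis in~\cite{Ry1, Ry2}, is to arrange two different $\mathcal{A}$-sections $S_1=U/L$ and $S_2=U'/L$ on which $\mathcal{B}$ restricts to isomorphic $S$-rings, and to define $\varphi$ to be the identity outside $U$ and to act on the basic sets inside $U$ by the non-trivial ``swap'' coming from an algebraic automorphism of $\mathcal{B}_U$ that does not lift. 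Concretely one wants $\mathcal{B}_U$ to be an $S$-ring over $C_p\times C_{p^2}$ possessing an algebraic automorphism outside $\aut_{\alg}(\mathcal{B}_U)_0$ — such $S$-rings exist and are exactly what makes $C_p\times C_{p^2}$ sit at the boundary — no, wait: $C_p\times C_{p^2}$ \emph{is} separable by~\cite[Theorem~1]{Ry1}; so instead the non-liftability must come from the interaction of $\mathcal{B}_U$ with the quotient $\mathcal{B}_{H/L}$, exactly as in the proof that generalized wreath products can fail Lemma~\ref{sepwr} when the condition $\aut(\mathcal{B}_U)^S=\aut(\mathcal{B}_S)$ breaks. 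So the concrete target is: find $U\le H$ with $|U|=p^3$ or $|U|=p^2$, a subgroup $L\cong C_p$ with $L\trianglelefteq H$ and $L\le\rad(X)$ for all basic $X$ outside $U$, such that $\mathcal{B}_S$ with $S=U/L$ has an algebraic automorphism $\varphi_S\notin(\aut_{\alg}(\mathcal{B}_S))_0$-lift through $\mathcal{B}$, and glue it to the identity on the rest by the pattern of Lemma~\ref{nonsepwr}.

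The key steps, in order, are: (i) reduce to $k=2$ via Lemma~\ref{nonsepwr}, as above; (ii) for $p=2$, write down an explicit $S$-ring over $C_2\times C_2\times C_4$ (a generalized wreath product over a section of order $4$, with a non-schurian-type or ``mismatched-automorphism'' phenomenon) together with the explicit $\varphi$, verify $\varphi\in\aut_{\alg}$ by checking the structure constants exactly as in the proof of Lemma~\ref{nonsepwr} (the verification splits into the three cases: both sets inside $U$, one inside, none inside), and then show $\varphi$ is not induced by a combinatorial isomorphism by a counting/obstruction argument — e.g. by showing that any isomorphism inducing $\varphi$ would restrict to an isomorphism of $\mathcal{B}_U$ inducing $\varphi_U$ and of $\mathcal{B}_{H/L}$ inducing $\varphi_{H/L}$ (by~\cite[Lemma~3.4]{EP5}) whose existence is contradicted by the chosen parameters; (iii) repeat (ii) for $p=3$ over $C_3\times C_3\times C_9$, where the explicit $S$-ring and $\varphi$ will be different (and possibly need the COCO2P computation, in the spirit of Lemma~\ref{descr}, to locate the right $S$-ring and to certify $|\aut_{\alg}|>|\iso|/|\aut|$, which by~(3) is exactly the assertion $\aut_{\alg}\ne(\aut_{\alg})_0$). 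The main obstacle I expect is step (ii)–(iii): pinning down a clean, checkable pair $(\mathcal{B},\varphi)$ — the non-separability phenomenon is delicate and the natural candidates (e.g. plain wreath products, or cyclotomic $S$-rings with abelian $\aut$) are separable by Lemmas~\ref{sepwr} and~\ref{2isol1}, so one must engineer a section $S=U/L$ for which $\aut(\mathcal{B}_U)^S\subsetneq\aut(\mathcal{B}_S)$ and $\aut(\mathcal{B}_S)$ is \emph{not} 2-isolated, and then verify that the resulting $\varphi$ genuinely fails to lift; the bookkeeping that the glued $\varphi$ is an algebraic automorphism is routine (it mirrors Lemma~\ref{nonsepwr}), but certifying non-liftability cleanly, rather than by brute force, is the delicate part, and it is plausible that the paper resorts to the COCO2P computation here just as it did in Proposition~\ref{k1}.
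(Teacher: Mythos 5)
Your reduction to the case $k=2$ via Lemma~\ref{nonsepwr} is exactly what the paper does, and the general shape you aim for (an explicit cyclotomic, hence schurian, $S$-ring over $C_p\times C_p\times C_{p^2}$ together with an explicit algebraic automorphism $\varphi$ that is verified directly on structure constants) also matches. However, there is a genuine gap at the decisive step: you never produce the $S$-ring, and the mechanism you propose for certifying that $\varphi$ is not induced by an isomorphism would not work. You suggest deriving a contradiction by restricting a hypothetical inducing isomorphism to sections via~\cite[Lemma~3.4]{EP5}; but every proper section of $C_p\times C_p\times C_{p^2}$ is a group of the form $C_{p^2}$, $C_p\times C_p$, $C_p\times C_{p^2}$, $C_p^3$, etc., all of which are separable (by Theorem~\ref{main1}, \cite[Theorem~1]{Ry1}, and Proposition~\ref{k1}), so the restriction of $\varphi$ to any proper section is always induced by an isomorphism and no obstruction can be extracted there. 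You half-notice this tension in your second paragraph but do not resolve it.

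The idea you are missing is the algebraic fusion argument. The paper takes $\mathcal{A}$ cyclotomic (so schurian) and invokes Lemma~\ref{fusion}: if $\varphi$ were induced by a combinatorial isomorphism, the fusion $\mathcal{A}^{\langle\varphi\rangle}$ would again be schurian. The contradiction is then that $\mathcal{A}^{\langle\varphi\rangle}$ is demonstrably non-schurian --- by a COCO2P computation for $p=2$ (where $\mathcal{A}$ corresponds to a Kleinian quasi-thin scheme of index~4), and for $p=3$ by identifying $\mathcal{A}^{\langle\varphi\rangle}$ with the non-schurian $S$-ring over $C_3\times C_3\times C_9$ constructed in~\cite{EKP}. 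So the non-liftability certificate lives at the level of the whole group, not its sections, and is obtained by passing to a fusion rather than by counting. Your fallback suggestion of computing $|\aut_{\alg}(\mathcal{A})|$ versus $|\iso(\mathcal{A})|/|\aut(\mathcal{A})|$ and invoking~(3) is in principle a valid alternative certificate, but without the explicit pair $(\mathcal{A},\varphi)$ --- which is the actual content of the proposition --- the proposal remains a plan rather than a proof.
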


\begin{proof}

In view of Lemma~\ref{nonsepwr} to prove that the group $C_p\times C_p\times C_{p^k}$ is non-separable for $p\in\{2,3\}$ and $k\geq 2$ it is sufficient to construct an $S$-ring $\mathcal{A}$ over $C_p\times C_p\times C_{p^2}$, $p\in\{2,3\}$, and an algebraic isomorphism $\varphi$ from $\mathcal{A}$ to itself which is not induced by an isomorphism. 

Let $G=\langle a \rangle \times \langle b \rangle \times \langle c \rangle$, where $|a|=|b|=p$ and $|c|=p^2$. Put $A=\langle a \rangle$, $B=\langle b \rangle$, $C=\langle c \rangle$, $c_1=c^p$, and $C_1=\langle c_1 \rangle$. Firstly consider the case $p=2$. Let $f\in \aut(G)$ such that 
$$f:(a,b,c)\rightarrow (a, bac_1, ca)$$
and $\mathcal{A}=\cyc( \langle f \rangle, G)$. It easy to see that $|f|=2$ and the basic sets of $\mathcal{A}$ are the following
$$T_0=\{e\},~T_1=\{a\},~T_2=\{c_1\},~T_3=\{ac_1\},$$
$$X_1=cA,~X_2=c^3A,$$
$$Y_1=b\langle ac_1 \rangle,~Y_2=ba\langle ac_1 \rangle,$$
$$Z_1=bcC_1, Z_2=bcaC_1.$$
Define a permutation $\varphi$ on the set $\mathcal{S}(\mathcal{A})$ as follows:
$$T_0^{\varphi}=T_0,~T_1^{\varphi}=T_1,~T_2^{\varphi}=T_3,~T_3^{\varphi}=T_2,$$
$$X_1^{\varphi}=X_1,~X_2^{\varphi}=X_2,$$
$$Y_1^{\varphi}=Z_1, Y_2^{\varphi}=Z_2, Z_1^{\varphi}=Y_1,~Z_2^{\varphi}=Y_2.$$
It easy to see that $|\varphi|=2$. The straightforward check implies that $\varphi$ is an algebraic isomorphism from $\mathcal{A}$ to itself. Let us check, for example, that $c^{T_2}_{Y_1,Y_2}=c^{T_2^{\varphi}}_{Y_1^{\varphi},Y_2^{\varphi}}$. We have $\underline{Y_1}\underline{Y_2}=2a+2c_1$ and $\underline{Y_1}^{\varphi}\underline{Y_2}^{\varphi}=\underline{Z_1}\underline{Z_2}=2a+2ac_1$. So $c^{T_2}_{Y_1,Y_2}=c^{T_2^{\varphi}}_{Y_1^{\varphi},Y_2^{\varphi}}=2$. 

Note that $\mathcal{A}$ corresponds to a Kleinian quasi-thin scheme of index~4 in the sense of~\cite{MP2}. The $S$-ring $\mathcal{A}$ is cyclotomic and hence it is schurian. Assume that $\varphi$ is induced by an isomorphism. Then the algebraic fusion $\mathcal{A}^{\langle \varphi \rangle}$ is schurian by Lemma~\ref{fusion}. However, computer calculations made by using the package COCO2P~\cite{GAP} (see also~\cite{Ziv}) imply that $\mathcal{A}^{\langle \varphi \rangle}$ is non-schurian, a contradiction. Therefore, $\varphi$ is not induced by an isomorphism and hence $\mathcal{A}$ is non-separable.

Now let $p=3$. Let $f_1,f_2,f_3\in\aut(G)$ such that
$$f_1:(a,b,c) \rightarrow (a^{-1}, b^{-1}, c^{-1}),~f_2:(a,b,c) \rightarrow (a, b, cc_1),~f_3:(a,b,c) \rightarrow (a,ba,c).$$
The direct check implies that $|f_1|=2$, $|f_2|=|f_3|=3$, and $f_1,f_2,f_3$ pairwise commute. Put $K=\langle f_1 \rangle \times \langle f_2 \rangle \times \langle f_3 \rangle$ and $\mathcal{A}=\cyc(K,G)$. The basic sets of $\mathcal{A}$ are the following:
$$T_0=\{e\},~T_1=\{a,a^{-1}\},~T_2=\{c_1,c_1^{-1}\},~T_3=\{ac_1,a^{-1}c_1^{-1}\},~T_4=\{a^{-1}c_1,ac_1^{-1}\},$$
$$X_1=cC_1\cup c^{-1}C_1,~X_2=caC_1\cup c^{-1}a^{-1}C_1,~X_3=ca^{-1}C_1\cup c^{-1}aC_1,$$
$$Y_1=bA\cup b^{-1}A,~Y_2=bc_1A\cup b^{-1}c_1^{-1}A,~Y_3=b^{-1}c_1A\cup bc_1^{-1}A,$$
$$Z_1=\{bc,b^{-1}c^{-1}\}(A\times C_1),~Z_2=\{b^{-1}c,bc^{-1}\}(A\times C_1).$$
Let $\varphi$ be a permutation on the set $\mathcal{S}(\mathcal{A})$ such that $T_3^{\varphi}=T_4$, $T_4^{\varphi}=T_3$, and $X^{\varphi}=X$ for every $X\in\mathcal{S}(\mathcal{A})\setminus \{T_3,T_4\}$. Clearly, $|\varphi|=2$. Note that for every $X,Y\in\mathcal{S}(\mathcal{A})\setminus \{T_3,T_4\}$ the elements $\underline{T_3}$ and $\underline{T_4}$ enter with non-zero coefficients the element $\underline{X}\underline{Y}$ only in the following cases: $X=Y=Z_i$, $X=X_i,Y=X_j,~i\neq j$, $X=Y_i,Y=Y_j,~i\neq j$. The straightforward check using this observation implies that $\varphi$ is an algebraic isomorphism from $\mathcal{A}$ to itself. 

If $\varphi$ is induced by an isomorphism then $\mathcal{A}^{\langle \varphi \rangle}$ is schurian by Lemma~\ref{fusion}. However, $\mathcal{A}^{\langle \varphi \rangle}$ coincides with the non-schurian $S$-ring constructed in~\cite[pp. 8-10]{EKP} in case of $G\cong C_3\times C_3\times C_9$, a contradiction. Thus, $\varphi$ is not induced by an isomorphism and hence $\mathcal{A}$ is non-separable. The proposition is proved.

\end{proof}

Theorem~\ref{main2} is an immediate consequence of Proposition~\ref{k1} and Proposition~\ref{k2}.

\end{document}